\let\ORIlabel\label
\let\ORIrefstepcounter\refstepcounter
\AddToHook{package/hyperref/before}{
   \let\label\ORIlabel 
   \let\refstepcounter\ORIrefstepcounter}
\documentclass[final,onefignum,onetabnum]{siamart220329}

\usepackage{amssymb,amsfonts, amsmath} %
\usepackage{bm}
\usepackage{todonotes}
\usepackage{mathtools, relsize}
\usepackage{mathrsfs}

\makeatletter

\usepackage{multirow}
\newcommand{\R}{\ensuremath{\mathbb{R}}}

\newcommand{\vs}[1]{\textcolor{black}{#1}}
\newcommand{\cep}[1]{\textcolor{black}{#1}}

\usepackage{array}
\newcommand{\Kr}{\mathscr K}
\newcommand{\bX}{\bm X}
\newcommand{\bR}{\bm R}

\newcommand{\bA}{\bm A}

\newcommand{\bB}{\bm B}

\newcommand{\bP}{\bm P}

\newcommand{\bZ}{\bm Z}

\newcommand{\sscr}{{{\sc s}{\sc \small s}--{\sc {gcr}{\small (1)}}}}

\newcommand{\ssmr}{{{\sc s}{\sc \small s}--{\sc {mr}}}}

\newcommand{\bmat}[1]{\bm #1}
\theoremstyle{remark}
\newtheorem{remark}[theorem]{Remark}

\newtheorem{assumption}{Assumption}
\usepackage{enumitem}
\setlist[itemize]{topsep=-2pt,itemsep=0pt,leftmargin=15.0pt}

\usepackage{graphicx,epstopdf} 
\usepackage{tikz}
\usepackage{subcaption}
\usepackage{algorithm}
\usepackage{algpseudocode}
\usepackage{algorithmicx}
\usepackage{caption}

\usetikzlibrary{shapes.geometric}

\definecolor{matlabred}{rgb}{0.9047,    0.1918,    0.1988}
\definecolor{matlabblue}{rgb}{0.2941    0.5447    0.7494}
\definecolor{matlabgreen}{rgb}{	0.3718    0.7176    0.3612}
\definecolor{matlaborange}{rgb}{1.0000    0.5482    0.1000}

\usepackage{algpseudocode}
\algnewcommand{\LineComment}[1]{\Statex \(\%\) \small \textit{#1} \(\%\)}

\Crefname{ALC@unique}{Line}{Lines}
\usepackage{algorithm}

\title{A class of low-rank short recurrences for 
nonsymmetric linear matrix equations %
\thanks{Version of \today}}
\author{Davide Palitta\thanks{Dipartimento di Matematica and (AM)$^2$,
Alma Mater Studiorum Universit\`a di Bologna,
Piazza di Porta San Donato  5, I-40127 Bologna, Italy,
{\tt \{davide.palitta,valeria.simoncini\}@unibo.it}}
 \and Catherine E. Powell\thanks{Department of Mathematics, University of Manchester, Oxford Road, Manchester M13 9PL, United
Kingdom {\tt catherine.powell@manchester.ac.uk}}
 \and Valeria Simoncini$^\dagger$\thanks{IMATI-CNR, Pavia, Italy. }}

\ifpdf
\hypersetup{ pdftitle={Subspace-conjugate residual} }
\fi

\begin{document}
\maketitle

\renewcommand{\thefootnote}{\fnsymbol{footnote}}
\maketitle \pagestyle{myheadings} \thispagestyle{plain}
\markboth{ D.\ PALITTA, C.\ E.\ POWELL, and V.\ SIMONCINI}{NEW RECURRENCES FOR NONSYMMETRIC MATRIX EQUATIONS}


\begin{abstract}
We propose a new class of short matrix recurrences for the solution of nonsymmetric linear equations of the type
$\bmat{A}_1\bmat{X}\bmat{B}_1+\ldots+\bmat{A}_p\bmat{X}\bmat{B}_p=CD^T$. \cep{Building on ideas underpinning the recently introduced subspace conjugate gradient \vs{algorithm,}
we derive low-rank short recurrences that generalize
one-dimensional subspace projection methods to the nonsymmetric {\em matrix} equation setting. To limit memory consumption and maximize computational efficiency, rank truncation strategies and modern randomization procedures are incorporated into the proposed algorithms.} Computational experiments on a benchmark problem as well as a challenging discretized mixed formulation of a diffusion equation with random inputs illustrate the potential of the proposed methodology.
\end{abstract}

\begin{keywords}
Multiterm matrix equations, low-rank approximation, nonsymmetric matrix operators, sketching strategies, stochastic Galerkin method. 
\end{keywords}

\begin{MSCcodes}
 	65F45, 65F25, 65F99
\end{MSCcodes}


\section{Introduction}
We are interested in the numerical solution of large-scale multiterm matrix equations of the form
\begin{equation}\label{eq:main}
\bmat{A}_1\bmat{X}\bmat{B}_1+\ldots+\bmat{A}_p\bmat{X}\bmat{B}_p=CD^T,
\end{equation}
where for $i=1,\ldots,p$ the coefficient matrices $\bmat{A}_i\in\R^{n_A\times n_A}$, $\bmat{B}_i\in\R^{n_B\times n_B}$ are large, sparse and \emph{nonsymmetric},
and $C\in\R^{n_A\times q}$, $D\in\R^{n_B\times q}$ are tall full-rank matrices with $q\ll n_A,n_B$. Note that the solution matrix $\bm X \in \mathbb{R}^{n_{A} \times n_{B}}$ is rectangular in general. 
We define the linear operator 
\begin{eqnarray}\label{eqn:L}
\mathcal{L}:\R^{n_A\times n_B}\rightarrow\R^{n_A\times n_B}, \quad
\mathcal{L}(\bmat{X}):=\bmat{A}_1\bmat{X}\bmat{B}_1+\ldots+\bmat{A}_p \bmat{X}\bmat{B}_p,
\end{eqnarray}
so that \eqref{eq:main} can be written as $ \mathcal{L}(\bmat{X})=CD^T$,
and we assume that \eqref{eq:main} is uniquely solvable. We are mainly interested in matrix equations with $p>2$.
For $p=1$ the problem amounts to solving
two linear systems with multiple right-hand sides, while 
for $p=2$, a generalized Sylvester equation is obtained \cite{Simoncini2016}.

The matrix equation (\ref{eq:main}) can also be written as a standard (vector) linear system, 
\vspace{-0.1in}
\begin{equation}\label{eqn:Kron}
{\cal A} x = b, \qquad
{\cal A} = \sum_{i=1}^p {\bm B}_i^T \otimes {\bm A}_i ,
\end{equation}
where $x, b$ are the vectorizations of the matrices $\bm X$, $CD^T$, 
respectively. Here, $\otimes$ stands for the Kronecker product and $\cal A$ is nonsymmetric.
Although the vector formulation (\ref{eqn:Kron}) may be more familiar, the presence of the Kronecker product makes the dimension unacceptably
large, especially if ${\bm A}_i$, ${\bm B}_i$ are large themselves. If standard iterative methods are employed to solve (\ref{eqn:Kron}),
full vectors of length $n_A n_B$ need to be stored, which may be impossible to do. Under these strong memory constraints, the matrix formulation 
can provide significant benefits, the most prominent being the fact that the low-rank structure can be preserved and exploited. If the solution $\bmat{X}$ can be well approximated by low-rank matrices, then approximations can be sought that are already in low-rank factored form, significantly lowering memory requirements. 
However, we stress that a key condition for 
making the formulation (\ref{eq:main}) appealing is that the right-hand side matrix has low rank, or else can be well approximated by a low-rank matrix. In general, without this condition it is hard to ensure that $\bm X$ will be numerically low rank. See  
\cite{Benner.Breiten.13} for pioneering results on low-rank properties of the solution matrix for problems arising in the control of dynamical systems.

In recent decades interest in matrix equations of the type (\ref{eq:main}) has grown in various scientific areas. 
Indeed,  algebraic problems in matrix form naturally arise in the discretization of partial differential equations (PDEs) with separable coefficients on polygonal domains using finite difference methods, or whenever tensor approximation spaces are adopted. The latter is the case, for instance, in isogeometric analysis \cite{Sangalli.Tani.16}, and
in certain spectral methods \cite[section 5.1.3]{CHQZ}. A setting where a multiterm matrix representation of the discretized problem is very natural is space-time formulations in which 
the left and right coefficient matrices are identified with spaces associated with the distinct variables \cite{Henningetal.22}. In this setting the matrix formulation avoids another potential downside of the Kronecker formulation in (\ref{eqn:Kron}), that is the artificial mixing of quantities that may have very different behaviors and interpretations. Along the same lines, matrix equations (\ref{eq:main}) also naturally arise in the numerical solution of certain classes of parametric PDEs (or PDEs with uncertain inputs) when these are discretized using tensor product schemes that treat the spatial and parametric variables separately; see section~\ref{sec:diffusion} for one such example.
Multiterm matrix equations also classically play a key role in the analysis of stochastic or bilinear control systems, where the structure arises naturally from the problem, without any restrictive assumptions on the form of discretization \cite[section 6.4]{BCOW.17}. They also arise in other PDE-related settings, such as PDE-constrained optimization \cite{Dolgov.Stoll.17},\cite{Buengeretal.21}. Finally, we mention that multiterm matrix equations arise in data science,
image processing, and inverse problems \cite{Zhang.Nagy.18}; see \cite{Simoncini2016} for an  overview. 

Despite the nowadays rich realm of applications of matrix equations, algorithmic developments are lagging behind. Most early contributions to the solution of (\ref{eq:main}) in its generality resort to the Kronecker form (\ref{eqn:Kron}) in some form or another, and use
the matrix structure mostly to build the preconditioner or other acceleration
devices; see, e.g.,
\cite{Sangalli.Tani.16},\cite{PalittaKuerschner2021},\cite{Palitta.Simoncini.16},\cite{Shanketal.16},%
\cite{Ullman.10},\cite{Stoll.Breiten.15},\cite{Damm.08},\cite{Henningetal.22}.  Algorithms that genuinely attack (\ref{eq:main}), especially in the nonsymmetric case, are scarce.  

Existing contributions can be divided into two main streams: projection methods and short recurrences. 
Methods in the first class can be successfully applied as long as left and right approximation
spaces can be built that contain enough spectral information relating to the matrices  $\bm{A}_i$, $\bm{B}_i$, respectively \cite{Buengeretal.21}.
On the other hand, short recurrences  implicitly build an approximation space.
Given an initial ${\bm X}_0$, such methods determine a sequence of approximations $\{{\bm X}_k\}_{k\ge 0}$
as ${\bm X}_{k+1}={\bm X}_{k} + {\bm M}_{k}$ where the update matrix ${\bm M}_{k}$ is usually forced to have low rank and kept in factored form. 
For instance, in the symmetric case, early approaches 
transformed vector methods for (\ref{eqn:Kron}), say the Conjugate Gradient (CG) method, into a matrix iteration. To maintain low-rank iterates (for the solution approximation, direction and residual matrices) rank truncation is performed; in \cite{KressnerTobler2011} the method was developed in detail, and then further used, e.g.,
in \cite{Benner.Onwunta.Stoll.15},\cite{Kressner.Plesinger.Tobler.14}.
Other perspectives for recurrences include alternating methods derived for a particular nonsymmetric PDE-constrained optimization problem in \cite{Dolgov.Stoll.17}, and optimization approaches, see, e.g., \cite{Biolietal2024} for the symmetric case and \cite{Kressner2015} as a rank-one update for the nonsymmetric case. 

In this paper we introduce a new family of methods that generalize the well established class of {\it Generalized Conjugate Residual} vector methods (which culminated in the GMRES algorithm) to the nonsymmetric problem (\ref{eq:main}). To do this, we leverage recent ideas from \cite{Palittaetal2025} for the symmetric and positive definite case. Briefly, starting from matrix-oriented CG, a recurrence of the form ${\bm X}_{k+1}={\bm X}_{k} + P_k^{(l)} {\bm \alpha}_k (P_k^{(r)})^T$ was proposed, where ${\bm \alpha}_k $ is a {\it matrix} obtained by solving a local minimization problem, 
and the pair $P_k^{(l)}, P_k^{(r)}$ generates an approximation space that is expanded
as the iteration proceeds. A truncation strategy to control the rank growth was also implemented. 
We propose a new short recurrence of the same type as in \cite{Palittaetal2025} where the matrices ${\bm \alpha}_k$, $P_k^{(l)}$, and $P_k^{(r)}$ are now selected to satisfy local optimality properties that are appropriate for the case when ${\cal L}$ is nonsymmetric. 

Classical one-dimensional projection methods for vector linear systems are first reviewed in section \ref{sec:vectors}.
The new class of methods is derived in section~\ref{sec:short_rec}.
Within this class, we derive a
 local Minimal Residual method and a  one-term Generalized Conjugate Residual iteration. In section \ref{sec:algos} we present algorithmic details and discuss mechanisms for making the new methods computationally efficient on large scale problems, including rank truncation and randomization strategies to reduce memory requirements. Preconditioning strategies are discussed in section \ref{sec:preconditioning} and convergence analysis is presented in section \ref{sec:analysis}. We illustrate the performance of our new methods first on a benchmark problem; see section~\ref{sec:benchmark}. Finally, in section~\ref{sec:diffusion} we focus on a challenging matrix equation that initially motivated this work, arising from a stochastic Galerkin discretization of a parametric PDE.

\subsection{Notation}\label{Notation}
Throughout the paper, capital bold letters ($\bX$) are used to denote matrices of large dimension, such as $n_A\times n_B$, $n_A\times n_A$, or $n_B\times n_B$, with capital letters ($X$) denoting their possibly low-rank factors, e.g. $\bX=X_1X_2^T$.  Greek letters ($\alpha$) will denote scalars whereas bold Greek letters ($\bm\alpha$)
will be used for matrices of small dimension. Hence, $\text{blkdiag}(\bm{\alpha}_1, \ldots,\bm{\alpha}_s)$ will
denote a block diagonal matrix with small matrices
$\bm{\alpha}_1, \ldots,\bm{\alpha}_s$ on its diagonal blocks.
The symbol $\otimes$ denotes the Kronecker product and $\text{vec}(\cdot)$ is the
operator that stacks the columns of a matrix one below the other to form a vector. 
For $\cal L$ defined in (\ref{eqn:L}), we also define the operator
$$
\mathcal{L}^{*}(\bmat{X}):= \bmat{A}_1^T\bmat{X}\bmat{B}_1^T+\ldots+\bmat{A}_p ^T\bmat{X}\bmat{B}_p^T.
$$
For $R\in\R^{n_A\times s}$ we use the short-hand notation 
$\bA_\star \bullet R =[\bA_1R, \ldots, \bA_p R]$, 
and analogously for $\bB_\star^T\bullet  R$ with $R\in\R^{n_B\times s}$. We say that a real nonsymmetric
matrix ${\cal A}$ is positive definite if $x^T {\cal A} x>0$ for any nonzero real vector $x$.
 
The notation $e_i$ is used for the $i$-th column of the identity matrix, whose dimension will be clear from the context, while $\mathbf{1}_n\in\mathbb{R}^n$ denotes the vector of all ones.
For a matrix $\bX$, $\text{range}(\bX)$ is the space spanned by its columns. Finally, $\|\cdot \|$ denotes the Euclidean norm for vectors and its induced norm for matrices, while $\|\cdot \|_F$ denotes the matrix Frobenius norm.

\section{Classical projection methods for linear systems}\label{sec:vectors}
In this section we recall a few classical iterative methods that are employed for solving linear systems of equations ${\mathcal A} x = b$, when ${\mathcal A}$ may be nonsymmetric. These simple solution strategies, categorized as one-dimensional projection methods, serve as our starting point for developing new matrix-oriented low-rank methods in the sequel. Given a starting approximation $x_0$ and the corresponding residual 
$r_0=b- {\mathcal A} x_{0}$, a sequence $\{x_k\}_{k\ge 0}$ of approximations is determined as
\begin{equation}\label{eqn:x_rec_MR}
    x_{k+1}= x_k + \alpha_kr_k  ,\quad 
r_{k+1}= b - {\mathcal A} x_{k+1},
\end{equation}
for some constant $\alpha_k$ whose choice  completely defines the method; see, e.g., \cite[section 5.3]{Saad2003}. We are particularly 
interested in the case where, at each iteration $k$, $\alpha_k$ is chosen so that the function
$\phi(\alpha):=\|b-{\mathcal A} (x_{k}+\alpha r_k)\|^2$ 
is minimized,
that is (see, for example, \cite[section 5.3.2]{Saad2003})
$$
\alpha_k = \frac{({\mathcal A} r_k)^T r_k}{({\mathcal A} r_k)^T{\mathcal A} r_k} .
$$
This ensures that $r_{k+1}$ is orthogonal to range$({{\mathcal A} r_k})$, that is, $r_{k+1}$ satisfies a Petrov-Galerkin condition with respect to a one-dimensional subspace. In keeping with classical literature, we shall refer to this procedure as Minimal Residual (MR) iteration.

An alternative class of approaches injects more subspace information by generating direction vectors that satisfy certain orthogonality properties. The Generalized Conjugate Residual (GCR) method falls into this class.
At each iteration, the approximation is updated as
\begin{eqnarray}\label{eqn:alpha_CR}
x_{k+1}= x_k + \alpha_k p_k , \quad 
r_{k+1}= b - {\mathcal A} x_{k+1},
\end{eqnarray}
where, starting with $p_0=r_0$, a new recurrence of ``direction'' vectors $\{p_k\}_{k\ge 0}$ is introduced. 
GCR imposes the condition that {\it all} vectors ${\mathcal A} p_k$ be orthogonal and for nonsymmetric matrices ${\mathcal A}$ this constraint needs to be imposed explicitly. To make the procedure sustainable in terms of computational cost and memory, the orthogonality condition
may be imposed only with respect to $\ell\leq k$ vectors, giving rise to the so-called {\sc orthomin}($\ell$) algorithm. Here, the search directions are updated as
\begin{equation}\label{eqn:pvec}
p_{k+1} = r_{k+1} + \sum_{j=k-\ell+1}^{k} \beta_k^{(j)} p_j, \qquad
\beta_k^{(j)} = - \frac{({\mathcal A} r_{k+1})^T{\mathcal A} p_j}{({\mathcal A} p_j)^T{\mathcal A} p_j} .
\end{equation}
We are particularly interested in the case $\ell=1$, resulting in the following {\sc orthomin}(1) method,
\begin{eqnarray}\label{orthomin_recurrence}
x_{k+1}&=& x_k + \alpha_k p_k , \qquad  \alpha_k = \frac{({\mathcal A} \cep{p}_k)^T r_k}{({\mathcal A} p_k)^T{\mathcal A} p_k} \notag\\
r_{k+1}&=& b - {\mathcal A} x_{k+1}, \qquad
p_{k+1} = r_{k+1} + \beta_k p_k, \qquad \beta_k = - \frac{({\mathcal A} r_{k+1})^T{\mathcal A} p_k}{({\mathcal A} p_k)^T{\mathcal A} p_k}.
\end{eqnarray}
This strategy provides a reasonable trade-off between storage demand and computational cost. It has lower memory requirements than GCR, while imposing stronger orthogonality conditions compared to the MR iteration. Note that if ${\mathcal A}$ were symmetric, {\sc orthomin}(1) would correspond to the Conjugate Residual (CR) method where the mutual orthogonality of all vectors $\mathcal{A}p_k$ is guaranteed by the symmetry of ${\mathcal A}$, even for $\ell=1$. We refer to~\cite{taxonomy} for a comprehensive description of the above methods and their interrelations in the context of iterative solvers for linear systems of equations.

\section{Low-rank short matrix recurrences}\label{sec:short_rec}
We are now interested in adapting the MR and {\sc orthomin}-type recurrences ((\ref{eqn:x_rec_MR}) and~\eqref{orthomin_recurrence}, respectively) to our matrix equation setting. {As already mentioned, a naive strategy would be to first transform \eqref{eq:main} into ${\cal A} x = b$ as in (\ref{eqn:Kron}) 
and then apply} an iteration of the form $x_{k+1}=x_k + \alpha_k p_k$ for a specific choice of $p_k$ {and $\alpha_k$}. This would fail to exploit both the Kronecker structure of ${\mathcal A}$ and a low-rank matrix representation of $x=\textrm{vec}(\bmat{X})$. Our new idea then is to devise a principled matrix-oriented generalization of the classical (vector) one-dimensional projection scheme (\ref{eqn:x_rec_MR}), where the low-rank representation of iterates is preserved from one step to the next. More precisely, given a starting approximation $\bmat{X}_{0}$, we aim to derive recurrences of the form
\begin{equation}\label{eqn:Xiter}
\bmat{X}_{k+1} = \bmat{X}_{k} + V_k^{(l)} \bmat{\alpha}_k (
V_k^{(r)})^T , \quad
\bmat{R}_{k+1} =C D^T -  {\cal L}(\bmat{X}_{k+1}),
\end{equation}
where the pair ($V_k^{(l)}$, $V_k^{(r)}$) replaces the direction vector and $\bmat{\alpha}_k$ is now a \emph{matrix} of conforming size. Note that in a practical implementation the approximate solution is kept in factored form, so that the update is performed accordingly (see section~\ref{sec:algos}). 
The idea of using a matrix iteration of the form (\ref{eqn:Xiter}) with $\bmat{\alpha}_k $ being a matrix was  recently introduced in \cite{Palittaetal2025} for symmetric and positive definite operators.
The use of short recurrences for nonsymmetric $\cal A$ allows us to generalize this idea to the case of the nonsymmetric operator $\cal L$, or to the case of a preconditioning strategy that makes the preconditioned operator nonsymmetric; see one such example in section~\ref{sec:diffusion}.

 \subsection{The~\ssmr\ method}\label{sec:mr}
In this section we design the matrix counterpart of the MR iteration~\eqref{eqn:x_rec_MR}.
 Let $\bmat{R}_0=CD^T - {\cal L}(\bmat{X}_0)$ and write
 $\bmat{R}_0=R_0^{(l)} (R_0^{(r)})^T$. Here and in the following we assume that $\bmat{X}_0$ is such that $\bmat{R}_0$ has low rank. {\color{black} Unless important prior information about the problem at hand is available, we recommend setting $\bmat{X}_{0}=0$ to
 \vs{avoid the early}
 rank growth of the iterates.} 
To generalize the iteration (\ref{eqn:x_rec_MR}) we consider the recurrence
 \begin{eqnarray*}
\bmat{X}_{k+1} &=& \bmat{X}_{k} + R_k^{(l)} \bmat{\alpha}_k (
R_k^{(r)})^T \\
\bmat{R}_{k+1} &=& C D^T -  {\cal L}(\bmat{X}_{k+1}), \qquad 
 \bmat{R}_{k+1}=:R_{k+1}^{(l)} (R_{k+1}^{(r)})^T ,
\end{eqnarray*}
where, at step $k$, $\bmat{\alpha}_k$ is chosen to minimize the Frobenius norm of the residual, namely 
 \begin{equation}\label{eqn:MR_matrix}
 \min_{\bm{\alpha}\in\R^{q_k\times q_k}}
 \|CD^T-\mathcal{L}(\bmat{X}_k + R_k^{(l)} \bmat{\alpha} (
R_k^{(r)})^T)\|_F^2.
 \end{equation}
 Here $q_k$ is the column dimension of the full rank matrix
 $R_{k}^{(l)}$. Equivalently, equation~\eqref{eqn:MR_matrix} corresponds to 
\begin{equation}\label{eqn:MR_matrixL}
\min_{\bm{\alpha}\in\R^{q_k\times q_k}}\left\|\bmat{R}_k-\mathcal{L}\left(R_k^{(l)}\bm{\alpha}(R_k^{(r)})^T\right)\right\|_F^2.
 \end{equation}

The following result provides the solution to this minimization problem, as a generalization of the case where $\alpha_k$ is a scalar.
\begin{proposition}\label{prop:alpha}
 Let $\bmat{R}_k=R_k^{(l)} (R_k^{(r)})^T$. The minimizer $\bm{\alpha}_k$ of~\eqref{eqn:MR_matrix} is the solution to the following reduced multiterm matrix equation
 \begin{equation}\label{eqn:projectedeq_alpha}
(R_k^{(l)})^T\mathcal{L}^* \left (\mathcal{L} (R_k^{(l)} \bm{\alpha} (R_k^{(r)})^T ) \right) R_k^{(r)} =(R_k^{(l)})^T\mathcal{L}^* (\bmat{R}_k) R_k^{(r)} .
 \end{equation}
 Moreover, 
 $ \textrm{vec}(\bmat{R}_{k+1})\perp
\mathcal{A}\cdot\textrm{Range}(R_k^{(r)}\otimes R_k^{(l)})$.
\end{proposition}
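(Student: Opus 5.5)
The plan is to vectorize the objective (\ref{eqn:MR_matrixL}) and reduce it to a standard linear least squares problem. Let $r_k=\textrm{vec}(\bmat{R}_k)$, $a=\textrm{vec}(\bm\alpha)$, and $\mathcal{V}_k = R_k^{(r)}\otimes R_k^{(l)}$. Then $\textrm{vec}(R_k^{(l)}\bm\alpha (R_k^{(r)})^T)=\mathcal{V}_k a$ and $\textrm{vec}(\mathcal{L}(\bm Y))=\mathcal{A}\,\textrm{vec}(\bm Y)$ with $\mathcal{A}$ as in (\ref{eqn:Kron}). Problem (\ref{eqn:MR_matrixL}) therefore becomes
\[
\min_{a\in\R^{q_k^2}} \|r_k - \mathcal{A}\mathcal{V}_k a\|^2 .
\]
Its minimizers are exactly the solutions of the normal equations $\mathcal{V}_k^T\mathcal{A}^T\mathcal{A}\mathcal{V}_k a = \mathcal{V}_k^T\mathcal{A}^T r_k$.

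Next I translate the normal equations back to matrix form. One checks that $\mathcal{A}^T=\sum_i \bm B_i\otimes \bm A_i^T$, so $\mathcal{A}^T\textrm{vec}(\bm Y)=\textrm{vec}(\mathcal{L}^*(\bm Y))$. Also $\mathcal{V}_k^T\textrm{vec}(\bm Z)=\textrm{vec}((R_k^{(l)})^T\bm Z R_k^{(r)})$, by the identity $(B^T\otimes A)\textrm{vec}(\bm Z)=\textrm{vec}(A\bm Z B)$ applied with $A=(R_k^{(l)})^T$ and $B=R_k^{(r)}$. Applying these two identities to both sides gives exactly (\ref{eqn:projectedeq_alpha}).

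For uniqueness, I use that $R_k^{(l)}$ and $R_k^{(r)}$ have full column rank, so $\mathcal{V}_k$ has full column rank. Since $\mathcal{L}$ is uniquely solvable, $\mathcal{A}$ is nonsingular, hence $\mathcal{A}\mathcal{V}_k$ has full column rank. Consequently $\mathcal{V}_k^T\mathcal{A}^T\mathcal{A}\mathcal{V}_k$ is symmetric positive definite, and the minimizer $\bm\alpha_k$ is unique and characterized by (\ref{eqn:projectedeq_alpha}). This full-rank bookkeeping is the only point needing care. If $R_k^{(r)}$ were not full rank we would still get a minimizer, just not a unique one, so I will state the full-rank assumption explicitly, consistent with the definition of $q_k$.

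For the orthogonality claim, note that $\textrm{vec}(\bmat{R}_{k+1}) = r_k - \mathcal{A}\mathcal{V}_k a_k$, by linearity of $\mathcal{L}$ and the recurrence for $\bmat{X}_{k+1}$. The normal equations state precisely that $(\mathcal{A}\mathcal{V}_k)^T\textrm{vec}(\bmat{R}_{k+1})=0$. Hence $\textrm{vec}(\bmat{R}_{k+1})$ is orthogonal to $\textrm{range}(\mathcal{A}\mathcal{V}_k)=\mathcal{A}\cdot\textrm{Range}(R_k^{(r)}\otimes R_k^{(l)})$. This is the Petrov–Galerkin condition generalizing the scalar MR case.
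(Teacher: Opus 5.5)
Your proposal is correct and follows the paper's proof essentially step for step. The paper likewise vectorizes with $W=R_k^{(r)}\otimes R_k^{(l)}$, writes the normal equations $(\mathcal{A}W)^T\mathcal{A}W z=(\mathcal{A}W)^T r_k$, maps them back to the reduced matrix equation through the Kronecker--vec identities, and reads off the orthogonality as the least-squares residual property; your extra check that $\mathcal{A}W$ has full column rank, which justifies speaking of \emph{the} minimizer, is a detail the paper leaves implicit.
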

\begin{proof}
By defining $W=R_k^{(r)}\otimes R_k^{(l)}$ \cep{and $z= \textrm{vec}(\bm\alpha)$} and writing $r_k = {\rm vec}(\bmat{R}_k)$, the minimization problem (\ref{eqn:MR_matrixL}) can be recast as 
$$
\min_{\underline{\bm\alpha}\in {\mathbb R}^{q_k^2}} \| r_k - {\cal A} W \cep{z}\|^2.
$$
Hence, \cep{$z$} solves the normal equation
$ ({\cal A} W)^T ({\cal A} W)\cep{z}= ({\cal A} W)^T r_{k}$. 
Going back to matrix form, this normal equation reads as follows
\begin{equation}\label{eqn:doublesum}
\sum_{i=1}^p\sum_{j=1}^p(R_k^{(l)})^T\bmat{A}_i^T\bmat{A}_j R_k^{(l)}\bm{\alpha}(R_k^{(r)})^T\bmat{B}_j\bmat{B}_i^T R_k^{(r)}=(R_k^{(l)})^T\mathcal{L}^*(\bmat{R}_k) R_k^{(r)},
\end{equation}
which is indeed (\ref{eqn:projectedeq_alpha}).
The orthogonality condition follows from standard properties of the residual of least squares problems.
\end{proof}

The explicit form (\ref{eqn:doublesum}) of the matrix equation (\ref{eqn:projectedeq_alpha}) reveals that the coefficient operator consists of $p^2$ terms, which have to be computed at each iteration. We postpone the discussion of computational strategies to solve this equation to section~\ref{sec:alphabeta}. Proposition \ref{prop:alpha} and its proof show that the matrix iteration relies on a Petrov-Galerkin orthogonality constraint with respect to a Kronecker structured subspace, corresponding to the minimization of the residual norm. In the sequel we thus refer to this procedure as the Subspace Minimal Residual method, or \ssmr\ for short.

\subsection{The~\sscr\ method}\label{sscr}
In this section, we derive the matrix counterpart of 
the generalized conjugate residual methods that were described for the vector setting in section~\ref{sec:vectors}. The recurrence for the approximate solution can be written as in (\ref{eqn:Xiter}), where 
the pair $(V_k^{(l)}, V_k^{(r)})$ is now renamed $(P_k^{(l)}, P_k^{(r)})$ so that 
 \begin{equation*}
\bmat{X}_{k+1} = \bmat{X}_{k} + P_k^{(l)} \bmat{\alpha}_k (
P_k^{(r)})^T,\quad 
\bmat{R}_{k+1} = C D^T -  {\cal L}(\bmat{X}_{k+1}),
\end{equation*}
with $\bmat{R}_{k+1}$ retained in factored form as $\bmat{R}_{k+1}=R_{k+1}^{(l)} (R_{k+1}^{(r)})^T$.
The coefficient matrix $\bm{\alpha}_{k}$ is again obtained by minimizing the Frobenius norm of the residual, so that it now satisfies
 \begin{equation}\label{eq:projectedeq_alpha}
(P_k^{(l)})^T\mathcal{L}^*\left(\mathcal{L} (P_k^{(l)} \bm{\alpha}_{k} (P_k^{(r)})^T) \right) P_k^{(r)} =(P_k^{(l)})^T\mathcal{L}^*(\bmat{R}_k) P_k^{(r)} .
 \end{equation}

As a counterpart of (\ref{eqn:pvec}), the matrix sequence 
$\{\bmat{P}_k\}_{k\ge 0}$, with
$\bmat{P}_k = P_{k}^{(l)} (P_{k}^{(r)})^T$, could be defined using
\begin{equation}\label{eqn:Pmatj}
\bmat{P}_{k+1} = \bmat{R}_{k+1} + \sum_{j=k-\ell+1}^k P_j^{(l)} \bmat{\beta}_j (P_j^{(r)})^T,
\end{equation}
for a set of matrix-valued coefficients $\bmat{\beta}_j$ to be computed. Given the presumably very high cost of computing more than one such coefficient,  in the following we only consider the case $\ell=1$, corresponding to the vector {\sc orthomin}(1) iteration \eqref{orthomin_recurrence}. In a way, the use of a subspace-based recurrence (with a matrix-valued $\bmat{\beta}_{k}$) may be viewed as a replacement for the multiterm sum in  (\ref{eqn:Pmatj}). To simplify notation from now on, we shall refer to our recurrence as \sscr.
We thus write
$$\bmat{P}_{k+1}=\bmat{R}_{k+1}+P_k^{(l)}\bm{\beta}_k(P_k^{(r)})^T,$$
where $\bm{\beta}_k\in\R^{q_k\times q_k}$ is computed by imposing the condition that 
{$\mathcal{L}(\bmat{P}_{k+1})$ is orthogonal to $\mathcal{L}(\bmat{P}_{k})$,
or, equivalently, that $\bmat{P}_{k+1}$ 
is $\mathcal{L}^*\mathcal{L}$-orthogonal to $\bmat{P}_k$.} 
This means that we impose the condition
\begin{equation}\label{eq:L2orth_direction}
(P_k^{(l)})^T\mathcal{L}^*(\mathcal{L}\left(\bmat{P}_{k+1}\right))P_k^{(r)}=0,
\end{equation}
and a direct computation shows that this is equivalent to computing $\bm{\beta}_k$ as the solution of the following projected equation
 \begin{equation}\label{eq:projectedeq_beta}
(P_k^{(l)})^T\mathcal{L}^*\left (\mathcal{L} (P_k^{(l)} \bm{\beta}_{k} (P_k^{(r)})^T)\right) P_k^{(r)}=-(P_k^{(l)})^T\mathcal{L}^*(\mathcal{L}(\bmat{R}_{k+1}))P_k^{(r)}.
 \end{equation}
\cep{In the symmetric case (i.e., when ${\cal L}^{*} = {\cal L}$), enforcing \eqref{eq:L2orth_direction} ensures that $\bmat{P}_{k+1}$ is $\mathcal{L}^{2}$-orthogonal to $\bmat{P}_k$ and we can establish the following result. We defer discussion of the
properties of \sscr\ in the non-symmetric case to Section \ref{sec:analysis}.}
\begin{proposition}\label{Prop:Discent_direction}
{\color{black} Assume $\mathcal{L}^*=\mathcal{L}$ and} let $\bmat{P}_{k+1}= {P}_{k+1}^{(l)}  (P^{(r)}_{k+1})^{T}$. {\color{black} Assume further that ${\cal L}(\bmat{P}_{k+1}) \neq 0$. If}
$$\Phi(\bmat{X}_{k+1}):=\|CD^T-{\cal L}(\bmat{X}_{k+1})\|_F^2 =
\|\bmat{R}_k-{\cal L}(P_k^{(l)}\bmat{\alpha}_k (P_k^{(r)})^T)\|_F^2,$$
then {\color{black}$\mathcal{L}({\bP}_{k+1})$} is a descent direction for $\Phi$, that is, {\color{black}$\langle \nabla\Phi(\bmat{X}_{k+1}), \mathcal{L}({\bP}_{k+1})\rangle_F < 0$}.
\end{proposition}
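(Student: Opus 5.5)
The plan is to compute the gradient of $\Phi$ explicitly and then use the $\mathcal{L}^*\mathcal{L}$-orthogonality condition \eqref{eq:L2orth_direction} that defines $\bm{\beta}_k$. The claim should reduce to $\langle \nabla\Phi(\bmat{X}_{k+1}), \mathcal{L}(\bP_{k+1})\rangle_F = -2\|\mathcal{L}(\bP_{k+1})\|_F^2$, which is negative under the assumption $\mathcal{L}(\bP_{k+1})\neq 0$.

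\emph{Step 1 (gradient).} Regard $\Phi(\bX)=\|CD^T-\mathcal{L}(\bX)\|_F^2$ as a function of $\bX$ with the Frobenius inner product. Expanding $\Phi(\bX+t\bm{E})$ to first order in $t$ gives the directional derivative $-2\langle CD^T-\mathcal{L}(\bX),\mathcal{L}(\bm{E})\rangle_F = -2\langle \mathcal{L}^*(CD^T-\mathcal{L}(\bX)),\bm{E}\rangle_F$. Hence $\nabla\Phi(\bmat{X}_{k+1})=-2\mathcal{L}^*(\bmat{R}_{k+1})$. Under the hypothesis $\mathcal{L}^*=\mathcal{L}$ this is $-2\mathcal{L}(\bmat{R}_{k+1})$. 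The target quantity is therefore
\[
\langle \nabla\Phi(\bmat{X}_{k+1}), \mathcal{L}(\bP_{k+1})\rangle_F=-2\langle \mathcal{L}(\bmat{R}_{k+1}),\mathcal{L}(\bP_{k+1})\rangle_F .
\]

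\emph{Step 2 (use the recurrence for $\bP_{k+1}$).} By linearity of $\mathcal{L}$ and $\bP_{k+1}=\bmat{R}_{k+1}+P_k^{(l)}\bm{\beta}_k(P_k^{(r)})^T$, we have $\mathcal{L}(\bmat{R}_{k+1})=\mathcal{L}(\bP_{k+1})-\mathcal{L}(P_k^{(l)}\bm{\beta}_k(P_k^{(r)})^T)$. Substituting gives
\[
\langle \mathcal{L}(\bmat{R}_{k+1}),\mathcal{L}(\bP_{k+1})\rangle_F=\|\mathcal{L}(\bP_{k+1})\|_F^2-\langle \mathcal{L}(P_k^{(l)}\bm{\beta}_k(P_k^{(r)})^T),\mathcal{L}(\bP_{k+1})\rangle_F .
\]

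\emph{Step 3 (the cross term vanishes).} This is the only step that needs any care. Move $\mathcal{L}$ to the other side as the adjoint and use the trace form of the Frobenius product:
\[
\langle P_k^{(l)}\bm{\beta}_k(P_k^{(r)})^T,\mathcal{L}^*\mathcal{L}(\bP_{k+1})\rangle_F=\mathrm{trace}\bigl(\bm{\beta}_k^T (P_k^{(l)})^T\mathcal{L}^*(\mathcal{L}(\bP_{k+1}))P_k^{(r)}\bigr).
\]
This is zero because the middle factor vanishes by \eqref{eq:L2orth_direction}, which is precisely how $\bm{\beta}_k$ was chosen via \eqref{eq:projectedeq_beta}. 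Therefore $\langle \nabla\Phi(\bmat{X}_{k+1}), \mathcal{L}(\bP_{k+1})\rangle_F=-2\|\mathcal{L}(\bP_{k+1})\|_F^2<0$.

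Some remarks on scope. The choice of $\bm{\alpha}_k$ is not needed beyond its role in defining $\bmat{R}_{k+1}$. Symmetry is used only to identify $\mathcal{L}^*(\bmat{R}_{k+1})$ with $\mathcal{L}(\bmat{R}_{k+1})$ in Step 1. I expect no real obstacle beyond writing Step 3 cleanly in trace form.
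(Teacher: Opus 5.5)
Your proposal is correct and follows essentially the same route as the paper's proof. Both compute $\nabla\Phi(\bX_{k+1})=-2\mathcal{L}(\bR_{k+1})$, substitute $\bR_{k+1}=\bP_{k+1}-P_k^{(l)}\bm{\beta}_k(P_k^{(r)})^T$, and use \eqref{eq:L2orth_direction} to kill the trace cross term, leaving $-2\|\mathcal{L}(\bP_{k+1})\|_F^2<0$. The one small difference is that you write the cross term with $\mathcal{L}^*\mathcal{L}$, which does not rely on symmetry, whereas the paper writes it directly as $\mathcal{L}(\mathcal{L}(\cdot))$ using $\mathcal{L}^*=\mathcal{L}$.
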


\begin{proof}

Let ${\cal A}$ and $b$ be as in (\ref{eqn:Kron}) {\color{black} and note that ${\cal A}$ is symmetric under the stated assumption.} Now define $\phi(x)= \|b-{\cal A}x\|^2$  and  notice that
$\phi(x)=\Phi(\bmat{X})$ where $x={\rm vec}(\bmat{X})$. It follows that {\color{black}$\nabla\phi(x)= -2 {\cal A} (b-{\cal A}x)$} and so
{\color{black}$\nabla\Phi(\bmat{X}_{k+1}) =  -2\mathcal{L}(\bmat{R}_{k+1})$}, where
$\bmat{R}_{k+1}= CD^T - {\cal L}(\bmat{X}_{k+1})$. Hence,
{\color{black}
\begin{equation*}
\begin{split}
\langle \nabla\Phi(\bmat{X}_{k+1}), \mathcal{L}(\bmat{P}_{k+1})\rangle_F &
= -2\langle \mathcal{L}(\bmat{R}_{k+1}),  \mathcal{L}(\bmat{P}_{k+1})\rangle_F\\
&= -2\langle \mathcal{L}(\bmat{P}_{k+1})-
\mathcal{L}({P}_{k}^{(l)}\bmat{\beta}_k({P}_k^{(r)})^{T})
,  \mathcal{L}(\bmat{P}_{k+1})\rangle_F\\
& = -2 \|\mathcal{L}(\bmat{P}_{k+1})\|^2_F +2\langle \mathcal{L}({P}_{k}^{(l)}\bm{\beta}_k({P}_k^{(r)})^{T}),\mathcal{L}(\bmat{P}_{k+1})
\rangle_F\\
&= - 2\|\mathcal{L}(\bmat{P}_{k+1})\|_F^2
+2\,\text{trace}\left(\bmat{\beta}_k^T({P}_k^{(l)})^{T}
\mathcal{L}(\mathcal{L}(\bmat{P}_{k+1}))
{P}_k^{(r)}\right).
\end{split}
\end{equation*}}
{\color{black}The $\mathcal{L}^2$-orthogonality
of the matricized directions (cf.~\eqref{eq:L2orth_direction} with $\mathcal{L}^*=\mathcal{L}$)} then gives the stated result. 
\end{proof}

 \begin{algorithm}[t]
{\footnotesize
\begin{algorithmic}[1]
\smallskip
\Statex \textbf{Input:} Operator $\mathcal L:\mathbb{R}^{n_A\times n_B}\rightarrow \mathbb{R}^{n_A\times n_B}$ in~\eqref{eq:main}, low-rank factors $C$, $D$ of the right-hand side, initial guess $\bX_0$, maximum number of iterations $\texttt{maxit}$, stopping tolerance $\texttt{tol}$, truncation parameters {\tt maxrank} and {\tt toltrank}$^{(\dagger)}$. 
\Statex \textbf{Output:} Approximate solution \cep{$\bX_{k+1}$}  such that $\|\mathcal L(\cep{\bX_{k+1}})- C D^{T} \|_{F}\leq \| \cep{\bmat{R}_{0}} \|_{F} \cdot \texttt{tol}$
\smallskip

\State Set $\bR_0=C D^{T}-\mathcal{L}(\bX_0)$, $\bP_0=\bR_0=P_0^{(l)}(P_0^{(r)})^{T}$
\For{$k=0,\ldots,\mathtt{maxit}$}
\State Compute $\bm{\alpha}_k$ by solving~\eqref{eq:projectedeq_alpha}\label{line:compute_alpha}
\State Set $\bX_{k+1}=\bX_k+P_k^{(l)}\bm{\alpha}_k (P^{(r)}_k)^T$ in a factorized form $X_{k+1}^{(l)}\bm{\tau}_{k+1}(X_{k+1}^{(r)})^{T}=\bX_{k+1}$
 {\flushright{\hspace{7cm} optional: low-rank truncation of $\bX_{k+1}$}}
\State Set $\bR_{k+1}=CD^{T}-\mathcal{L}(X_{k+1}^{(l)}\bm{\tau}_{k+1}(X_{k+1}^{(r)})^{T})$ in a factorized form $R_{k+1}^{(l)}\bm{\rho}_{k+1}(R_{k+1}^{(r)})^{T}=\bR_{k+1}$
 {\flushright{\hspace{7cm} optional: low-rank truncation of $\bR_{k+1}$}}
\If{$\|\bR_{k+1}\|_{F}\leq \|{\color{black}\bR_{0}}\|_{F} \cdot \texttt{tol}$}
\State Return $\bX_{k+1}$
\EndIf
\State Compute $\bm{\beta}_k$ by solving~\eqref{eq:projectedeq_beta}\label{line:compute_beta}
\State Set $\bP_{k+1}=\bR_{k+1}+P_k^{(l)}\bm{\beta}_k (P_k^{(r)})^{T}$ in a factorized form $P_{k+1}^{(l)}\bm{\gamma}_{k+1}(P_{k+1}^{(r)})^{T}=\bP_{k+1}$
 {\flushright{\hspace{7cm} optional: low-rank truncation of $\bP_{k+1}$}}\label{alg:linePk1}
\EndFor
\State Return $\bX_{k+1}$
\end{algorithmic}    \caption{\sscr\ - Subspace Generalized Conjugate Residual method.
\label{alg:subspaceCR} }
}
{\scriptsize$(\dagger)$: Parameters used in the low-rank truncation procedures at step 4, 5, and 10.}
\end{algorithm}

\section{The Algorithms}\label{sec:algos}
The \sscr\ scheme for \eqref{eq:main} is summarized in Algorithm~\ref{alg:subspaceCR}.  The \ssmr\ algorithm is obtained
by simply replacing lines 9--10 with 
$$
{\rm Set}\qquad \bmat{P}_{k+1}=\bmat{R}_{k+1},
$$
which corresponds to replacing $\bmat{P}_{k}$ with $\bmat{R}_k$ throughout the algorithm. The low-rank truncation of $\bmat{R}_{k+1}$ is then carried on. Note that $\bm{\beta}_k$ does not need to be computed in this case. This makes the cost per iteration of \ssmr\ lower than that of \sscr, especially when the rank of the factors is large. Moreover, no extra memory allocation for the matrices $\bmat{P}_k$ is required, making the method very appealing memory-wise. On the other hand, the weaker orthogonality condition imposed by \ssmr\ may result in a higher number of iterations being needed to meet a prescribed accuracy compared to \sscr.

Algorithm~\ref{alg:subspaceCR} includes some optional truncation steps.
As previously mentioned, the large dense matrices $\bX_{k+1}, \bP_{k+1}$ and $\bR_{k+1}$ are not explicitly formed.
Instead, each matrix is kept in factored form, and their ranks are truncated if necessary. The update of the approximate solution is linked to the subsequent factorization step as follows. Starting from
$\bX_{k}= X_{k}^{(l)} \bm{\tau}_{k} (X_{k}^{(r)})^T$, we have
\begin{eqnarray*}
\bX_{k+1} &= & \bX_k + P_k^{(l)} \bm{\alpha}_k (P_k^{(r)})^T \\
& = &  [X_k^{(l)}, P_k^{(l)} ] {\rm blkdiag}( \bm{\tau}_k, \bm{\alpha}_k)
 [X_k^{(r)}, P_k^{(r)} ]^T  =  
 X_{k+1}^{(l)} \bm{\tau}_{k+1} (X_{k+1}^{(r)})^T,
\end{eqnarray*}
where $X_{k+1}^{(l)}$ and $X_{k+1}^{(r)}$ are the reduced orthonormal factors of the QR decompositions of
$[X_k^{(l)}, P_k^{(l)} ]$ and $[X_k^{(r)}, P_k^{(r)} ]$, respectively. That is\footnote{\vs{For completeness, we notice that one may replace the reduced QR with a rank-revealing QR or an SVD, to more easily detect possible rank deficiency in the factorized matrices. This feature was not needed in our experiments.}}, if $[X_k^{(l)}, P_k^{(l)} ]=Q^{(l)}\bm{r}_l$ and $[X_k^{(r)}, P_k^{(r)} ]=Q^{(r)}\bm{r}_r$, then we can define $X_{k+1}^{(l)}:=Q^{(l)}$, $X_{k+1}^{(r)}:=Q^{(r)}$, and $\bm{\tau}_{k+1} :=
\bm{r}_l {\rm blkdiag}( \bm{\tau}_k, \bm{\alpha}_k) \bm{r}_r^T$. Alternatively, using a (truncated) \cep{singular value decomposition} we can lower the
rank of $X_{k+1}^{(l)}$ and $X_{k+1}^{(r)}$. Given a truncation tolerance {\tt toltrank} and a maximum rank {\tt maxrank}, if $U\Sigma V^T$ is the SVD of $\bm{r}_l {\rm blkdiag}( \bm{\tau}_k, \bm{\alpha}_k) \bm{r}_r^T$ and $\{\sigma_j\}_{j=1,\ldots,d_k}$ are its singular values, we select
\begin{equation}\label{eq:rankselection}
    \iota_{k+1}:= \min \left\{ \mathtt{maxrank}, {\rm arg}\max_j\{\sigma_j\, : \,(\sigma_j/\sigma_1)\leq\mathtt{toltrank}\}\right\},
\end{equation}
and then set 
$X_{k+1}^{(l)}:=Q^{(l)} U_{\iota_{k+1}}$, $\bm{\tau}_{k+1}= \Sigma_{ \iota_{k+1}}$ and $X_{k+1}^{(r)}:=Q^{(r)} V_{ \iota_{k+1}}$ where the truncated SVD of $\bm{r}_l {\rm blkdiag}( \bm{\tau}_k, \bm{\alpha}_k) \bm{r}_r^T$ of rank $\iota_{k+1}$ is denoted $ U_{\iota_{k+1}} \Sigma_{ \iota_{k+1}}V_{\iota_{k+1}}^T$. 

The procedure for updating the matrices $\bmat{P}_{k}$ is analogous. These matrices are not stored in full format; their factors are immediately created and saved. However, additional care needs to be taken with the update and the truncation of the residual matrix $\bmat{R}_{k}$, especially when the number $p$ of terms in the matrix equation and/or the maximum value of {\tt maxrank} is large. This will be discussed in section~\ref{Randomized residual matrix}.

\vs{The scalars {\tt maxrank} and {\tt toltrank} need to be tuned to reach the best performance of the method. If {\tt maxrank} is chosen to be too small, then the method may stagnate without reaching the desired final accuracy, and similarly if the applied truncation is too severe (i.e, a too large value of {\tt toltrank} is chosen). The stagnation phenomenon is a well known possible shortcoming of low-rank iterations, and it has been reported and analyzed in the relevant literature, see, e.g., \cite{Kressner.Plesinger.Tobler.14},\cite{KressnerTobler2011},\cite{Simoncini.Hao.23}. The parameter {\tt maxrank} in fact affects not only the solution factors, but all iterates involved in the computation, including the direction matrices. Hence, to avoid stagnation, a sufficiently large value of  {\tt maxrank} should be allowed, according to the available memory capacity. Tuning {\tt toltrank} may be easier to do; our computational experience has shown that a value of the order of $10^{-10}-10^{-8}$ may in general suffice. A priori information, if available, on the rank properties of the continuous or discrete problem solutions may also help to tune these quantities.}

\subsection{Computing ${\pmb\alpha}_k$ and ${\pmb \beta}_k$}\label{sec:alphabeta}

The computation of both $\bmat{\alpha}_k$ and $\bmat{\beta}_k$ requires the solution of a multiterm matrix equation. Thanks to the explicit projection onto $\text{range}(P_k^{(r)}\otimes P_k^{(l)})$, the coefficient matrices of this equation have smaller dimensions than the original $\bmat{A}_{i}$ and $\bmat{B}_{i}$. On the other hand, the application of the operator $\mathcal{L}^*\mathcal{L}$ squares the number of terms leading to an equation with $p^2$ terms. In passing, we note that this cost occurs in computing the (same) coefficient matrix for both $\bmat{\alpha}_k$ and $\bmat{\beta}_k$, but also  the right-hand side for $\bmat{\beta}_k$. Therefore, the computation of $\bm{\alpha}_k$ and $\bm{\beta}_k$ must be handled with care to avoid excessive computational costs. Using the rank $q_k$ of $P_k^{(l)}$ (and $P_k^{(r)}$), we propose two strategies for this task. 

\vskip 0.1in 
\begin{itemize}
 \item[(i)]  If $q_k$ is sufficiently small, the explicit construction of the Kronecker form of~\eqref{eqn:projectedeq_alpha} is feasible and the resulting SPD linear system can be solved using Cholesky factorisation. The main cost of this procedure lies in the computation of the $p^2$ Kronecker products in the coefficient matrix,
\begin{equation}\label{eq:inner_matrix_alpha_beta}
    \mathfrak{T}=\sum_{i=1}^p \sum_{j=1}^p \bigg((P_k^{(r)})^T\bmat{B}_i\bmat{B}_j^TP_k^{(r)}\bigg)\otimes \bigg((P_k^{(l)})^T\bmat{A}_i^T\bmat{A}_jP_k^{(l)}\bigg).
\end{equation}
    Indeed, even for very sparse matrices $\bmat{A}_i$ and $\bmat{B}_i$, $\mathfrak{T}$ is in general dense. Moreover, computing the $p^2$ terms may already be problematic for moderate values of $p$. One advantage, however, is that the coefficient matrix (and its Cholesky factor) employed for the computation of $\bm{\alpha}_k$ can be reused to compute $\bm{\beta}_k$ if needed. 

\item[(ii)]  If $q_k$ becomes too large,  CG can be used\footnote{The matrix-vector operation is performed in matrix-matrix form, without explicitly forming the full  coefficient matrix $\mathfrak{T}$ in Kronecker form.} to solve \eqref{eqn:projectedeq_alpha}. 
While the $p^2$ terms $(P_k^{(l)})^T\bmat{A}_i^T\bmat{A}_jP_k^{(l)}$ and
    $(P_k^{(r)})^T\bmat{B}_i\bmat{B}_j^TP_k^{(r)}$ for $i,j=1,\ldots,p$, still need to be computed, this approach avoids assembling their Kronecker product. On the other hand, the normal equations nature of~\eqref{eqn:projectedeq_alpha} makes the latter prone to \sloppy{ill-conditioning}, slowing convergence of CG. Preconditioning is thus essential. More details about this key aspect will be given in section~\ref{sec:preconditioning}. Solving \eqref{eqn:projectedeq_alpha} iteratively results in an inexact computation of $\bm{\alpha}_k$ and $\boldsymbol{\beta}_k$, so that the orthogonality properties illustrated in section~\ref{sec:mr}--\ref{sscr} hold only approximately. The rank truncation steps in Algorithm~\ref{alg:subspaceCR} already affect the orthogonality properties, even when $\bm{\alpha}_k$ and $\boldsymbol{\beta}_k$ are computed exactly, so we expect the iterative solution of~\eqref{eqn:projectedeq_alpha} to
    \vs{have a somewhat limited extra negative effect.}

\end{itemize}

\subsection{Randomized truncation of the residual matrix}\label{Randomized residual matrix}
Writing $\bmat{X}_{k+1}$ as $X_{k+1}^{(l)} \bm{\tau}_{k+1} (X_{k+1}^{(r)})^T$,
the residual matrix $\bmat{R}_{k+1}=CD^T-\mathcal{L}(\bmat{X}_{k+1})$ can be written in factored form as
\begin{eqnarray}
\bmat{R}_{k+1}&=&[C,\bmat{A}_1X_{k+1}^{(l)},\ldots,\bmat{A}_pX_{k+1}^{(l)}]
\begin{bmatrix}
    I_q & & & \\
        & - \bm{\tau}_{k+1} & & \\
        &  & \ddots & \\
        & & & -\bm{\tau}_{k+1} \\
\end{bmatrix}
\begin{bmatrix}
D^T \\ (X_{k+1}^{(r)})^T\bmat{B}_1 \\ \vdots \\ (X_{k+1}^{(r)})^T\bmat{B}_p 
\end{bmatrix} \nonumber \\
&=& [C, \bm{A}_\star\bullet X_{k+1}^{(l)}]
\begin{bmatrix}
    I_q &  \\
        & - I_p\otimes \bm{\tau}_{k+1} \\
\end{bmatrix}
[D,\bmat{B}_\star^T \bullet X_{k+1}^{(r)}]^T. \label{eqn:Rbullet}
\end{eqnarray}
\noindent One could compute the skinny QR factorizations of the left and right factors followed by a truncated SVD of the resulting core matrix. However, the storage demand of fully allocating $[C, \bm{A}_\star\bullet X_{k+1}^{(l)}]$ and $[D,\bmat{B}_\star^T \bullet X_{k+1}^{(r)}]$ amounts to $2(p \cdot \mathtt{maxrank}+q)$ columns, assuming  the rank of $\bm{X}_{k+1}$ 
is {\tt maxrank}. If $p$ and/or {\tt maxrank} are sufficiently small so that the two matrices can be stored, this strategy is feasible and yields a low-rank representation of $\bmat{R}_{k+1}$. Otherwise, we adopt  a randomization strategy that allows us to compute both a low-rank approximation of $\bmat{R}_{k+1}$, and its Frobenius norm, as required by the stopping criterion in Algorithm~\ref{alg:subspaceCR}.  {A key tool for this is} randomized oblivious $(\varepsilon,\delta,m)$-subspace embeddings. 

{\color{black} A sketching matrix $S\in\mathbb{R}^{s\times n}$ (with random entries) is said to be a randomized oblivious $(\varepsilon,\delta,m)$-subspace embedding if, for any $m$-dimensional subspace $\mathcal{V}\subset\mathbb{R}^n$,
\begin{equation}\label{eq:def_sketching}
    (1-\varepsilon)\|v\|^2\leq\|Sv\|^2\leq (1+\varepsilon)\|v\|^2\quad \vs{\forall\, v\in\mathcal{V},}    
\end{equation}
holds 
with probability at least $1-\delta$; see, e.g.,~\cite[Section 2]{RandomGS} and~\cite[Section 8]{Martinsson_Tropp_2020}}. Depending on the  chosen  $S$, various values of the sketching dimension $s$, as a function of $\varepsilon$, $m$, and $\delta$, have been identified that ensure \eqref{eq:def_sketching} is satisfied; see, for example, \cite[Section 9]{Halko2010}. 
If {$S$ is chosen so that \eqref{eq:def_sketching} holds}, and {we replace the vector $v$ with any matrix $R$} such that range$(R)\subset \mathcal{V}$, then
\begin{equation}\label{eq:def_Fsketching}
(1-\varepsilon)\|R\|_F^2\leq\|S R\|_F^2\leq(1+\varepsilon)\|R\|_F^2 
\end{equation}
{also holds with probability at least $1-\delta$}.
Due to the low computational cost of applying them, in our numerical experiments in sections~\ref{sec:benchmark} and~\ref{sec:diffusion} we choose sketching matrices known as randomized subsampled trigonometric transformations (RSTTs); see e.g.,~\cite[Section 4.6]{Halko2010}. {\color{black}In particular, we consider
\begin{equation}\label{eq:our_sketching}
S=\sqrt{\frac{s}{n}}PCD,    
\end{equation}
where $D\in\mathbb{R}^{n\times n}$ is a diagonal matrix containing $\pm1$ at random on its main diagonal, $C\in\mathbb{R}^{n\times n}$ is the discrete cosine transform matrix, and $P\in\mathbb{R}^{s\times n}$ is a sampling matrix having as rows the transpose of $s$ randomly chosen canonical basis vectors of $\mathbb{R}^n$. A sequential application of the linear transformations in the right-hand side of~\eqref{eq:our_sketching} implies that the cost of performing $Sx$ amounts to $\mathcal{O}(n\log n)$ flops; this is the approach we follow. However, more sophisticated implementations can cut this cost down to $\mathcal{O}(n\log s)$ flops; see~\cite{WOOLFE2008335}.}
In~\cite{Tropp2011} theoretical guarantees have been obtained for RSTTs by selecting $s = \mathcal{O}(\varepsilon^{-2}(m + \log \frac{n}{\delta}) \log \frac{m}{\delta})$. However, {\color{black}in~\cite[Section 9]{Martinsson_Tropp_2020} it is stated that
 selecting the smaller sketching dimension $s = \mathcal{O}(\varepsilon^{-2} m/\delta)$ works
well in practice. }

{In our context we want to define 
sketching matrices $S_A \in \mathbb{R}^{s_A \times n_{A}}$ and $S_B \in \mathbb{R}^{s_B \times n_{B}}$ for $\text{range}([C, \bm{A}_\star\bullet X_{k+1}^{(l)}])$ and $\text{range}([D,\bmat{B}_\star^T \bullet X_{k+1}^{(r)}])$, respectively. Since we do not know the dimensions $m_A$ and $m_B$ of these spaces, we can employ the upper bound $m_A,m_B\leq (p\cdot\mathtt{maxrank}+q)$ to select {$s_A$ and $s_B$}. As this bound is likely to be pessimistic, it is reasonable to choose $s_A = s_B =s$} where $s=2(p \cdot \mathtt{maxrank}+q)$.

In the following, we identify a constant $\gamma$ that depends on the embedding parameters, such that $\|\bmat{R}_{k+1}\|_F \le \gamma \|S_A \bmat{R}_{k+1} S_B^T\|_F$ {holds with high probability}.  We can then use the singular value decomposition of the two-sided `sketched' matrix $S_A \bmat{R}_{k+1} S_B^T \in \mathbb{R}^{s_{A} \times s_{B}}$ 
to construct $\bmat{R}_{k+1}$ in low rank factored form and to estimate $\|\bmat{R}_{k+1}\|_F$. 
More precisely, 
we perform the
following steps:
\vskip 0.15in
\begin{itemize}

\item[1.] Compute the {skinny} QR decompositions of the sketched left and right factors in \eqref{eqn:Rbullet} to give
$$
S_A [C, \bm{A}_\star\bullet X_{k+1}^{(l)}] = Q_A R_A \quad \mbox{and}\quad 
S_B [\cep{D}, \bm{B}_\star^T\bullet X_{k+1}^{(r)}]=Q_B R_B,
$$
discard the $Q$'s, and implicitly write
{\small 
\begin{eqnarray}
\bm{R}_{k+1} & = &
\left([C, \bm{A}_\star\bullet X_{k+1}^{(l)}] R_A^{-1} \right )
\,
\left( R_A {\rm blkdiag}(I, - I\otimes \bmat{\tau}_{k+1}) R_B^T\right ) 
\,
\left([\cep{D}, \bm{B}_\star^T\bullet X_{k+1}^{(r)}] R_B^{-1} \right )^T \nonumber \\
&=:& K_l \bm{\rho} K_r^T. \label{eqn:KrK}
\end{eqnarray}
}

\noindent The pseudo-inverses of $R_A$ and $R_B$ may be employed if these matrices are singular or severely ill-conditioned.

\item[2.] Compute the SVD $\bm{\rho}=U \Sigma V^T$, select $\iota$ as in~\eqref{eq:rankselection},  
and {truncate {the former} to obtain $U_{\iota},\Sigma_{\iota},V_{\iota}$.}

\item[3.] Update the (truncated) residual in factored form as 
$\bm{R}_{k+1} = R_{k+1}^{(l)} \bm{\rho}_{k+1} (R_{k+1}^{(r)})^T$ where
$$
R_{k+1}^{(l)} : = K_l U_{\iota}, \quad 
\bm{\rho}_{k+1} := \Sigma_{\iota}, \quad
R_{k+1}^{(r)}:= K_r V_{\iota} .
$$
\end{itemize} 
\vskip 0.15in 
{\color{black} In Table~\ref{tab:costsketching} we summarize the main computational costs of the procedure outlined above for the case where $S_A$ and $S_B$ are RSTTs as in~\eqref{eq:our_sketching}.
\begin{table}[t!]
    \centering
    {\color{black}
    \begin{tabular}{r|l}
        Compute $S_A [C, \bm{A}_\star\bullet X_{k+1}^{(l)}]$   &  \multirow{ 2}{*}{$\mathcal{O}((p\cdot\mathtt{maxrank}+q)(n_A\log n_A+n_B\log n_B))$} \\
        and $S_B [D, \bm{B}_\star^T\bullet X_{k+1}^{(r)}]$& \\
        QR factorizations & $\mathcal{O}(2(p\cdot\mathtt{maxrank}+q)^3)$\\
        SVD & $\mathcal{O}((p\cdot\mathtt{maxrank}+q)^3)$\\
        Retrieve $R_{k+1}^{(l)}$ and $R_{k+1}^{(r)}$& $\mathcal{O}(\iota(n_A+n_B)(p\cdot\mathtt{maxrank}+q))$ 
    \end{tabular}}
    \caption{{\color{black} Costs of the randomized low-rank truncation scheme for the residual when $S_A\in\mathbb{R}^{s\times n_A}$ and $S_B\in\mathbb{R}^{s\times n_B}$ are both RSTTs as in~\eqref{eq:our_sketching} with $s=2(p\cdot\mathtt{maxrank}+q)$.}}
    \label{tab:costsketching}
\end{table}
}

{Let (\ref{eq:def_Fsketching})} hold for $S_A$ applied to range$(K_l) \subseteq {\rm range}([C, \bm{A}_\star\bullet X_{k+1}^{(l)}])$, and for $S_B$ applied to range$(K_r) \subseteq {\rm range}([\cep{D}, \bm{B}_\star^T\bullet X_{k+1}^{(r)}])$ with embedding parameters $\varepsilon_A$, $\delta_A$ and $\varepsilon_B$, $\delta_B$, respectively. Using the form of the exact residual matrix from (\ref{eqn:KrK}) gives
\begin{eqnarray}
\|S_A \bmat{R}_{k+1} S_B^T \|_F^2 &=&
\|S_A (K_l \bmat{\rho} K_r^T S_B^T) \|_F^2 \ge (1-\varepsilon_A)
\|K_l \bmat{\rho} K_r^T S_B^T \|_F^2 \label{eqn:twosidedbound}\\
&\ge & 
(1-\varepsilon_A) (1-\varepsilon_B) \|K_l \bmat{\rho} K_r^T  \|_F^2 =:
\frac 1 {\gamma^2} \| \bmat{R}_{k+1} \|_F^2 \nonumber
\end{eqnarray}
 with $\gamma = (1-\varepsilon_A)^{-1/2} (1-\varepsilon_B)^{-1/2}$ {(see also \cite[Theorem 3.11]{MeierPhD.24} for a similar result)}, and the final inequality  holds with probability at least
 $(1-\delta_A)(1-\delta_B)$.
Hence, we have {the probabilistic upper bound}
\begin{equation*}
\| \bmat{R}_{k+1} \|_F \le \gamma \|S_A \bmat{R}_{k+1} S_B^T \|_F { = \gamma \, \|\Sigma\|_F}
\end{equation*}
with $\Sigma$ computed in step 2. {We refer to \cite{Meier.Nakatsukasa.24} for a more general analysis of truncation quality using left and right sketchings.}

If the $\bmat{B}_j$'s are small (i.e., if $n_{B}$ is small), there is no need to introduce the sketching matrix $S_B$ to reduce the dimension of $K_r$. In this
case, we replace (\ref{eqn:KrK}) with $\bm{R}_{k+1} = K_l \bm{\rho}$, where
$\bm{\rho}={\rm blkdiag}(I, - I\otimes \bmat{\tau}_{k+1})  \,
\left([\cep{D}, \bm{B}_\star^T\bullet X^{(r)}] \right )^T$,  and proceed with step 2. In step 3, we
simply define $R_{k+1}^{(r)} := V_{\iota}$.  Proceeding as in (\ref{eqn:twosidedbound}), the bound
$$
\|S_A \bmat{R}_{k+1} \|_F^2 
\ge (1-\varepsilon_A)
\|K_l \bmat{\rho} \|_F^2
$$ 
holds with probability at least $(1-\delta_A)$, from which 
$\| \bmat{R}_{k+1} \|_F \le \gamma \, \|\Sigma\|_F$ holds with the same probability,
 with $\gamma=(1-\varepsilon_A)^{-1/2}$. 
 An analogous procedure can be applied if the $\bmat{A}_j$'s (but not the $\bmat{B}_j$'s) are small.

{\color{black} \begin{remark}
{\rm
In all the experiments reported in sections~\ref{sec:benchmark}--\ref{sec:diffusion} we fix the sketching operators $S_A$ and $S_B$ once and for all at the beginning of the iterative solution process. 
This is justified by recalling that, by construction, the spaces we need to embed, ${\rm range}([C, \bm{A}_\star\bullet X_{k+1}^{(l)}])$ and ${\rm range}([D, \bm{B}_\star^T\bullet X_{k+1}^{(r)}])$, have dimension $p\cdot\mathtt{maxrank}+ q$ at most for any $k$
and that our $S_A$ and $S_B$ embed any 
$(p\cdot\mathtt{maxrank}+ q)$-dimensional subspace with high probability.
An alternative approach would be to draw new 
sketching operators $S_A$ and $S_B$ at each iteration $k$. This second strategy may imply smaller failure probabilities. However, in our numerical experiments fixing $S_A$ and $S_B$ once and for all leads to a highly robust solution process. 
}
\end{remark}
}

\subsection{Memory requirements}\label{Memory requirements}

In this section we summarize the memory requirements of \sscr\ and \ssmr. 

Recall that $\bX_k$, $\bR_k$, and $\bP_k$ are kept in low-rank factored form $V^{(l)} \bmat{\nu}(V^{(r)})^T$, with $V^{(l)}, V^{(r)}$ having at most {\tt maxrank} columns. In \sscr, allocating these iterates therefore requires storing up to $3((n_A+n_B)\cdot\mathtt{maxrank}+\mathtt{maxrank}^2)$ entries. This reduces to $2((n_A+n_B)\cdot\mathtt{maxrank}+\mathtt{maxrank}^2)$ entries for \ssmr, since $\bP_k=\bR_k$. Two matrices of size 
$n_A\times \mathtt{maxrank}$ and $n_B\times \mathtt{maxrank}$ are also required for working storage.

If $\bm{\alpha}_k$ and $\bm{\beta}_k$ are computed with a direct solver
(case (i) in section~\ref{sec:alphabeta}), then the
 full matrix $\mathfrak{T}$
 of size $\mathtt{maxrank}^2\times \mathtt{maxrank}^2$  defined in \eqref{eq:inner_matrix_alpha_beta} needs to be stored.
If $q_k$ is large (case (ii) in section~\ref{sec:alphabeta}) the main storage allocation demand is that of the $p^2$ dense matrices $(P_k^{(l)})^T\bmat{A}_i^T\bmat{A}_jP_k^{(l)}$ and $(P_k^{(r)})^T\bmat{B}_i\bmat{B}_j^TP_k^{(r)}$ for $i,j=1,\ldots,p$, of size at most $\mathtt{maxrank}\times \mathtt{maxrank}$ each. 
Employing the PCG method for the inner solves to compute $\bm{\alpha}_k$ and $\bm{\beta}_k$ inexactly does not significantly increase this demand.

Finally, as discussed in section~\ref{Randomized residual matrix}, the computation of the low-rank factorization of the residual matrix $\bR_k$ is performed via sketching when at least one of $n_A$ and $n_B$ is large. The main storage cost {is for the left and right factors which have a total of }
$(p\cdot\mathtt{maxrank}+q)(s_A+n_B)$ entries for one-sided sketching when, say,
$n_B<s_B$ and $(p\cdot\mathtt{maxrank}+q)(s_A+s_B)$ entries when two-sided sketching is applied.

\section{Preconditioning}\label{sec:preconditioning}

As in the vector case, acceleration procedures can be  employed for multiterm matrix equation solvers, see, e.g.,  \cite{Kressner.Plesinger.Tobler.14}, \cite{Biolietal2024}, \cite{Palittaetal2025}. Following similar derivations in \cite{Biolietal2024} and \cite{Palittaetal2025}, natural preconditioners are obtained when a `leading' part ${\cal P}$ of the operator ${\mathcal{L}}$ can be identified. That is, by splitting the operator as ${\cal L} = {\cal P} - {\cal N}$, one can use the leading term, or a cheaper approximation $\widetilde {\cal P}$ thereof, as a preconditioner. At each iteration, the action of the inverse is then applied as
$\widetilde {\cal P}^{-1}(\bmat{R})$ where $\bmat{R}$ is always stored in low-rank format.

Before describing how to efficiently incorporate preconditioning into our new algorithms, we need to discuss what types of preconditioners are feasible for matrix equations \eqref{eq:main} with many terms $(p>2)$. There are two main considerations. The first is that the identified splitting should lead to an effective preconditioner; this concern is typical of fixed-point type iterations. The second is that applying the action of ${\cal P}^{-1}$ (or an approximation thereof) should incur an acceptable computational cost. This will depend on the number of addends in the designated leading part of the operator. In 
the recent literature, two settings have been considered:
\vskip 0.1in
\begin{itemize}
\item[(i)] {\it One-term preconditioning}. In this case, a pair $(\bmat{A}_i, \bmat{B}_i)$ is identified as a preconditioner, so that ${\cal P}^{-1}(\bmat{R}) = \bmat{A}_i^{-1}\bmat{R}\bmat{B}_i^{-1}$. In the event that computing the action of the inverse of $\bmat{A}_i$ or $\bmat{B}_i$ is too expensive, these matrices may be replaced with suitable approximations.
See section~\ref{sec:diffusion} for an illustration of one-term preconditioning in a concrete application setting.

\item[(ii)] {\it Two-term preconditioning}. Here, we assume that there exist two pairs of coefficient matrices $(\bmat{A}_i, \bmat{B}_i)$ and $(\bmat{A}_j, \bmat{B}_j)$ such that
${\cal P}(\bmat{R})=\bmat{A}_i \bmat{R} \bmat{B}_i + \bmat{A}_j \bmat{R} \bmat{B}_j$ denotes the leading part of ${\mathcal L}$ applied to $\bmat{R}$. Inverting ${\cal P}$ corresponds to solving a Sylvester equation at each iteration, which is very expensive. In this case, following  \cite{Biolietal2024} and \cite{Palittaetal2025} we replace ${\cal P}^{-1}$ with the operator $\widetilde{\cal P}^{-1}$ that corresponds to applying a fixed number of ADI iterations\footnote{{ADI is an iterative method based on rational Krylov subspaces \cite{ADI_Sylv2009},\cite{Ellner1991}}.}.
See section~\ref{sec:benchmark} for an illustration of two-term preconditioning. Other solvers for Sylvester equations may be considered in place of ADI, depending on the properties of  $\cal P$.
\end{itemize}

\vskip 0.1in

A major problem arises if the leading term contains more than two addends, as applying the action of ${\cal P}^{-1}$ may then be as difficult as solving the original problem. In this case, an inner-outer procedure seems to be feasible. Here, one identifies $\widetilde{\cal P}^{-1}$ with a few iterations of another iterative method, or the same one, applied to the designated leading operator, or to the whole operator \cite[section 9.4.1]{Saad2003}.
An interesting preconditioning alternative was recently proposed in \cite{Voet.25},
where the possibility of an approximate inverse with a Kronecker structure is explored.

After line 8 of Algorithm~\ref{alg:subspaceCR} the action of the chosen preconditioner can be included in \sscr\ as follows
$$
\bmat{Z}_{k+1} =\widetilde{\cal P}^{-1}(\bmat{R}_{k+1}).
$$
Then, line 10 should be replaced by
$$
10. \hskip 0.5in \bmat{P}_{k+1} = \bmat{Z}_{k+1}  + P_k^{(l)} \bmat{\beta}_k (P_k^{(r)})^T
$$
where $\bmat{\beta}_k$ is computed accordingly. In \ssmr\, we 
simply need to set
$\bmat{P}_{k+1} = \bmat{Z}_{k+1}$. For both algorithms, we set  
$\bmat{P}_{0} = \bmat{Z}_{0}$.
In other words, the sequence $\{\bmat{P}_{k} \}_{k\ge 0}$ generates a recurrence of preconditioned spaces.

Regarding the memory requirements of preconditioning, since $\bZ_k$ is also kept in low-rank factored form, it has at most 
$(n_A+n_B)\cdot\mathtt{maxrank}+\mathtt{maxrank}^2$ entries to be stored. One-term preconditioning does not require additional memory; see (i) above.
If two-term preconditioning is used, more memory may need to be allocated. For instance, if
$\widetilde{\mathcal{P}}^{-1}$ is applied by performing $t_{ADI}$ iterations of the ADI method, {then the memory allocation demand increases by $t_{ADI}(n_A+n_B)\cdot\mathtt{maxrank}$;} { see, e.g.,~\cite[Section 2]{BENNER_SylvADI2014} for more details on the ADI memory cost.} 

{
\section{\vs{Theoretical considerations}}\label{sec:analysis}
In this section, we \cep{investigate} the \vs{theoretical} properties of the considered recurrences. Using the residual minimization properties and the \ssmr\ recurrence, \cep{we can relate the norms of two successive residual matrices.}
In the following, $\cal A$ is as defined in (\ref{eqn:Kron}).

\begin{proposition}\label{Prop6point1}
Assume that $\mathcal A$ is positive definite and 
let $\bmat{R}_k=R_k^{(l)} (R_k^{(r)})^T$ with $\bmat{R}_k\ne 0$. 
Let $\mu({\cal A}) = \lambda_{\min}( \frac 1 2 ({\cal A}+{\cal A}^T))$ \vs{ and ${\cal M}_{Q_k}= Q_k^T{\cal A}^T {\cal A}Q_k$, where the columns of $Q_k$ span the range of $R_k^{(r)}\otimes R_k^{(l)}$.}
Then after one \ssmr\  iteration it holds
\begin{equation}\label{eqn:elman_bound}
\|\bmat{R}_{k+1}\|_F  \le
	\left( 1 -  \frac{\mu({\cal A})^2}{\vs{\|{\cal M}_{Q_k}\|}}\right)^{\frac 1 2}
\|\bmat{R}_{k}\|_F .
\end{equation}
\vs{Since $\|{\cal A}\|^2 \ge  \|{\cal M}_{Q_{k}}\|$, it also follows
that $\|\bmat{R}_{k+1}\|_F  \le
	\left( 1 -  \frac{\mu({\cal A})^2}{\|{\cal A}\|^2}\right)^{\frac 1 2}
\|\bmat{R}_{k}\|_F$.}
\end{proposition}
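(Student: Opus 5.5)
We vectorize and mimic the classical Elman-type argument for the vector MR iteration. Write $r_k=\mathrm{vec}(\bmat{R}_k)$ and let $Q_k$ have orthonormal columns spanning $\mathrm{range}(R_k^{(r)}\otimes R_k^{(l)})$. By Proposition~\ref{prop:alpha}, $r_{k+1}=r_k-{\cal A}Q_k\hat z$, where $\hat z$ minimizes $\|r_k-{\cal A}Q_k z\|$ over all $z$, since $\mathrm{range}(W)=\mathrm{range}(Q_k)$ with $W=R_k^{(r)}\otimes R_k^{(l)}$. Hence $\|r_{k+1}\|^2=\|r_k\|^2-\|\Pi_k r_k\|^2$, where $\Pi_k$ is the orthogonal projector onto $\mathrm{range}({\cal A}Q_k)$. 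Equivalently,
\begin{equation*}
\|r_{k+1}\|^2=\|r_k\|^2 - r_k^T{\cal A}Q_k\,{\cal M}_{Q_k}^{-1}\,Q_k^T{\cal A}^T r_k .
\end{equation*}
Here ${\cal M}_{Q_k}$ is invertible because ${\cal A}$ is nonsingular, as positive definiteness gives $x^T{\cal A}x>0$. Since $\|\bmat{R}_k\|_F=\|r_k\|$, it suffices to show that the subtracted term is at least $\mu({\cal A})^2\|r_k\|^2/\|{\cal M}_{Q_k}\|$.

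The key observation is that $r_k$ itself lies in $\mathrm{range}(Q_k)$. Indeed $\bmat{R}_k=R_k^{(l)}(R_k^{(r)})^T$ gives $r_k=(R_k^{(r)}\otimes R_k^{(l)})\mathrm{vec}(I_{q_k})$, so $r_k=Q_k y$ with $\|y\|=\|r_k\|$. Rather than using the full projection, we bound it below by restricting to the test vector $z=ty$, that is, by a scalar MR step along $r_k$ inside the subspace. Since $\hat z$ is optimal,
\begin{equation*}
\|r_{k+1}\|^2\le \min_{t\in\R}\|r_k-t{\cal A}r_k\|^2=\|r_k\|^2-\frac{(r_k^T{\cal A}r_k)^2}{\|{\cal A}r_k\|^2}.
\end{equation*}
Positive definiteness then gives $r_k^T{\cal A}r_k=r_k^T\tfrac12({\cal A}+{\cal A}^T)r_k\ge\mu({\cal A})\|r_k\|^2$. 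For the denominator, $\|{\cal A}r_k\|^2=y^TQ_k^T{\cal A}^T{\cal A}Q_ky=y^T{\cal M}_{Q_k}y\le\|{\cal M}_{Q_k}\|\,\|r_k\|^2$. Combining these yields $\|r_{k+1}\|^2\le\bigl(1-\mu({\cal A})^2/\|{\cal M}_{Q_k}\|\bigr)\|r_k\|^2$, which is \eqref{eqn:elman_bound} after taking square roots.

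For the final claim, $\|{\cal M}_{Q_k}\|=\|{\cal A}Q_k\|^2\le\|{\cal A}\|^2\|Q_k\|^2=\|{\cal A}\|^2$ because $Q_k$ has orthonormal columns. Replacing the denominator by this larger quantity only increases the factor, which gives the weaker bound.

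The main point needing care is the step showing $r_k\in\mathrm{range}(Q_k)$ and expressing $\|{\cal A}r_k\|^2$ through ${\cal M}_{Q_k}$ with the correct norm of $y$. This relies on the orthonormality of $Q_k$, so that $\|y\|=\|r_k\|$. Everything else is the standard Elman estimate. We should also check that the factor $1-\mu^2/\|{\cal M}_{Q_k}\|$ is nonnegative. It is, since $\mu\|r_k\|^2\le r_k^T{\cal A}r_k\le\|r_k\|\,\|{\cal A}r_k\|$ gives $\mu^2\le\|{\cal A}r_k\|^2/\|r_k\|^2\le\|{\cal M}_{Q_k}\|$.
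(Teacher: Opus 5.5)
Your proof is correct, but it takes a different route from the paper's.

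\textbf{The paper's route.} It uses the exact identity $\|\bmat{R}_{k+1}\|_F^2=\|\bmat{R}_k\|_F^2-r_k^T\mathcal{A}Q_k\mathcal{M}_{Q_k}^{-1}Q_k^T\mathcal{A}^Tr_k$. This comes from the orthogonality of $\bmat{R}_{k+1}$ and the closed-form normal-equation solution for $\bm{\alpha}_k$. The subtracted term is then bounded as a product of two factors. The first is a Rayleigh quotient of $\mathcal{M}_{Q_k}^{-1}$ at $Q_k^T\mathcal{A}^Tr_k$, which is at least $1/\|\mathcal{M}_{Q_k}\|$. The second is $\|Q_k^T\mathcal{A}^Tr_k\|^2/\|r_k\|^2$, which is at least $\mu(\mathcal{A})^2$ by Cauchy--Schwarz applied to $r_k^T\mathcal{A}^Tr_k=s^TQ_k^T\mathcal{A}^Tr_k$ with $r_k=Q_ks$.

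\textbf{Your route.} You never use the formula for $\bm{\alpha}_k$. Optimality plus the admissible competitor $\bm{\alpha}=tI$ reduce everything to the classical scalar MR estimate. The only change is that $\|\mathcal{A}r_k\|^2\le\|\mathcal{M}_{Q_k}\|\,\|r_k\|^2$ replaces $\|\mathcal{A}r_k\|^2\le\|\mathcal{A}\|^2\|r_k\|^2$. Both arguments rest on the same fact, $r_k\in\mathrm{range}(Q_k)$.

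\textbf{What each buys.} Your argument is shorter and does not need $\mathcal{M}_{Q_k}$ to be invertible. It also gives the sharper intermediate estimate $\|\bmat{R}_{k+1}\|_F^2\le\bigl(1-(r_k^T\mathcal{A}r_k)^2/(\|r_k\|^2\|\mathcal{A}r_k\|^2)\bigr)\|\bmat{R}_k\|_F^2$. It makes plain that \eqref{eqn:elman_bound} already holds for the one-dimensional step $\bm{\alpha}=tI$, i.e.\ a single vector MR step. So the improvement of $\|\mathcal{M}_{Q_k}\|$ over $\|\mathcal{A}\|^2$ reflects only that $\bmat{R}_k$ lies in the search space, not the extra freedom of a matrix-valued coefficient. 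The paper's intermediate quantity $\|Q_k^T\mathcal{A}^Tr_k\|^2/\|\mathcal{M}_{Q_k}\|$ does involve the whole subspace. That makes it the natural starting point for capturing that extra freedom, although this is lost at the Cauchy--Schwarz step. The two intermediate bounds are not comparable in general.
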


\begin{proof}
From $\bmat{R}_{k+1} = \bmat{R}_k - 
\mathcal{L}(R_k^{(l)} \bm{\alpha}_k (R_k^{(r)})^T)$ we obtain
\begin{eqnarray*}
\|\bmat{R}_{k+1}\|_F^2 &= &
{\rm trace}(\bmat{R}_{k+1}^T \bmat{R}_{k}) -
{\rm trace}(\bmat{R}_{k+1}^T \mathcal{L}(R_k^{(l)} \bm{\alpha}_k (R_k^{(r)})^T)) \\
&=&
{\rm trace}(\bmat{R}_{k+1}^T \bmat{R}_{k}) =
{\rm trace}(\bmat{R}_{k}^T \bmat{R}_{k}) -
{\rm trace}(\bmat{R}_{k}^T \mathcal{L}(R_k^{(l)} \bm{\alpha}_k (R_k^{(r)})^T)) \\
&=&
\| \bmat{R}_{k}\|_F^2\left ( 1 - 
\frac{{\rm trace}(\bmat{R}_{k}^T \mathcal{L}(R_k^{(l)} \bm{\alpha}_k (R_k^{(r)})^T))%
}{ \|\bmat{R}_{k}\|_F^2 }\right ) .
    \end{eqnarray*}
In the second equality, we have used the orthogonality property
of $\bmat{R}_{k+1}$.
We next bound from below the second term in parentheses.
Let $W=R_k^{(r)}\otimes R_k^{(l)}$ and $e={\rm vec}(I_q)$, so that
$r={\rm vec}(\bmat{R}_{k})=W e$.
Then
\begin{equation}\label{eqn:ratio}
\frac{{\rm trace}(\bmat{R}_{k}^T \mathcal{L}(R_k^{(l)} \bm{\alpha}_k (R_k^{(r)})^T))%
}{ \|\bmat{R}_{k}\|_F^2 } =
\frac{r^T {\cal A} W \underline{\bmat{\alpha}} }{r^Tr}
\end{equation}
where $\underline{\bmat{\alpha}} = \textrm{vec}(\bm{\alpha}_k)$. Writing the equation for $\bm\alpha_{k}$ in vector form as
$W^T{\cal A}^T {\cal A}W \underline{\bmat{\alpha}}=W^T {\cal A}^T r$, 
and setting ${\cal M}= W^T{\cal A}^T {\cal A}W$ then gives $\underline{\bmat{\alpha}} = {\cal M}^{-1}(W^T {\cal A}^T r)$. Substituting this expression into the right-hand side of (\ref{eqn:ratio}) we obtain

$$
\frac{{\rm trace}(\bmat{R}_{k}^T \mathcal{L}(R_k^{(l)} \bm{\alpha}_k (R_k^{(r)})^T))%
}{ \|\bmat{R}_{k}\|_F^2 } =
\frac{r^T {\cal A} W  {\cal M}^{-1}W^T {\cal A}^T r }{r^T r} .
$$
\vs{Let} $W=\vs{Q_k}\bmat{\rho}$ be the reduced QR decomposition of $W$\vs{, and}
${\cal M}_{\vs{Q_k}}:=\vs{Q_k}^T{\cal A}^T{\cal A}\vs{Q_k}$%
\vs{; In the following we omit the subscript $k$.} Then
$$
\frac{r^T {\cal A} W  {\cal M}^{-1}W^T {\cal A}^T r }{r^T r}
=
\frac{r^T {\cal A} Q  {\cal M}_Q^{-1}Q^T {\cal A}^T r }{r^T r}
= \frac{r^T {\cal A} Q  {\cal M}_Q^{-1}Q^T {\cal A}^T r }{r^T {\cal A} Q Q^T {\cal A}^T r} \, \,  \frac{r^T {\cal A} Q  Q^T {\cal A}^T r }{r^Tr}. 
$$

For the numerator of the second factor, we let $r=We=Q\bmat{\rho}e=:Q s$, so
that $\|r\|=\|s\|$. We observe that 
$|r^T {\cal A}^T r |=
|s^T Q^T {\cal A}^T r | \le \|s\| \| Q^T {\cal A}^T r\|$ and in particular, that
$\|Q^T {\cal A}^T r\|\ne 0$ for
$r\ne 0$. Hence, 
\begin{eqnarray*}
r^T {\cal A} Q  Q^T {\cal A}^T r 
&= & 
    \|Q^T {\cal A}^T r\|^2 
    \ge \frac{\left|r^T {\cal A}^T r \right|^2}{\|r\|^2}  
	 = \frac{\left|\frac 1 2 r^T ({\cal A}^T + {\cal A}) r \right|^2}{\|r\|^2}  \ge \mu({\cal A})^2 \|r\|^4 / \|r\|^2 .
\end{eqnarray*}
The first factor is bounded as
\begin{equation}\label{eqn:normA}
\frac{r^T {\cal A} Q  {\cal M}_Q^{-1}Q^T {\cal A}^T r }{r^T {\cal A} Q Q^T {\cal A}^T r} \ge
	\frac 1 {\|{\cal M}_Q\|}\vs{.}
\end{equation}
In summary, we have obtained the following bound
$$
\frac{{\rm trace}(\bmat{R}_{k}^T \mathcal{L}(R_k^{(l)} \bm{\alpha}_k (R_k^{(r)})^T))%
}{ \|\bmat{R}_{k}\|_F^2 } \ge 
\vs{\frac 1 {\|{\cal M}_Q\|}}
\frac{\mu({\cal A})^2 \|r\|^4}{\|r\|^4}.
$$
Therefore,
$$
\| \bmat{R}_{k+1}\|_F^2 =
\| \bmat{R}_{k}\|_F^2\left ( 1 - 
\frac{{\rm trace}(\bmat{R}_{k}^T \mathcal{L}(R_k^{(l)} \bm{\alpha}_k (R_k^{(r)})^T))%
}{ \|\bmat{R}_{k}\|_F^2 }\right ) \le
\| \bmat{R}_{k}\|_F^2
\vs{\left ( 1 - \frac{\mu({\cal A})^2}{ \|{\cal M}_Q\|}\right )}.
$$
\vs{Since $\|{\cal A}\|^2 \ge  \|{\cal M}_{Q_{k}}\|$, the second result follows.}
\end{proof}
}

\vs{We remark that using arguments similar to those for the vector setting \cite[page 34]{Greenbaum1997}, the residual norm for \sscr\ is also reduced, by at least as much as for \ssmr.} 
\vs{The results of Proposition~\ref{Prop6point1} imply}
that if the residual matrix $\bm{R}_{k}$ is not truncated, 
\vs{$\|\bmat{R}_{k}\|_F$ converges to zero as $k\to \infty$,}
since $\mu({\cal A}) < \|{\cal A}\|$
and
\begin{equation*}\label{eqn:elman_bound2}
\|\bmat{R}_{k}\|_F  \le
	\left( 1 -  \frac{\mu({\cal A})^2}{\|{\cal A}\|^2}\right)^{\frac k 2}
\|\bmat{R}_{0}\|_F .
\end{equation*}
This \vs{analysis} is in line with known classical results for Generalized Conjugate Residual type methods \cite{Vatsya.88}.
\vs{The presence of $\|{\cal M}_Q\|$ in place of $\|{\cal A}\|^2$ in (\ref{eqn:elman_bound}) suggests that faster decay can occur. In particular, the bound in (\ref{eqn:elman_bound})  takes into account the presence of the subspace projection step associated with having a {\it matrix} $\bmat{\alpha}_k$.}

Unfortunately, in general, the \vs{bound in (\ref{eqn:elman_bound})}  may not be descriptive of the actual convergence of the method, \vs{as is the case for the classical  result.}

\vskip .1in
We now discuss the behavior of the matrix iterations in terms of the computed subspaces, with a view to interpreting the methods once truncation is incorporated. 
Recall the notation 
$\bA_\star \bullet R =[\bA_1R, \ldots, \bA_p R]$,  
and analogously for $\bB_\star^T\bullet  R$ with $R$ of conforming dimensions. Moreover, note that for $k\ge 0$, subsequent applications of the operator can be written as
$\bA_\star^{k+1} \bullet R=
\bA_\star \bullet (\bA_\star^k \bullet R)$. 
We then define the space 
$\Kr_k(\bA_\star,R_0)= {\rm range}([R_0, \bA_\star \bullet R_0, \ldots, \bA_\star^k \bullet R_0])$ (as in \cite{Palittaetal2025}).
Note that the spaces are nested, that is
$\Kr_k(\bA_\star,R_0)\subseteq \Kr_{k+1}(\bA_\star,R_0)$.

After $k$ iterations of either \sscr\ or \ssmr\ and without forced rank truncation to \texttt{maxrank}, the columns of 
 ${R}_{k+1}^{(l)}$,  ${P}_{k+1}^{(l)}$ span $\Kr_k(\bA_\star,R_0^{(l)})$, and similarly, the columns of
${R}_{k+1}^{(r)}$, ${P}_{k+1}^{(r)}$ span $\Kr_k(\bB_\star^T,R_0^{(r)})$. The dimension of these spaces quickly grows due to the inclusion of many terms\footnote{The actual dimension growth at each iteration depends both on $p$ and {\tt maxrank}, but also on the linear independence of the added columns with respect to the already computed space.} as $k$ increases,
 although it may grow less than one would expect, due to possible redundancies.
 Recalling the derivation of the recurrence coefficient $\bmat{\alpha}_k$ (e.g., Proposition~\ref{prop:alpha} for \ssmr), it follows that before truncation is enforced, both methods \ssmr\ and \sscr\ perform a matrix Petrov-Galerkin projection onto the spaces $\bA_\star\bullet \Kr_k(\bA_\star,R_0^{(l)})$ (from the left) and  $\bB_\star^T\bullet \Kr_k(\bB_\star^T,R_0^{(r)})$ (from the right)\footnote{This consideration is well known in the vector case, and it corresponds to the mathematical equivalence of GCR methods with GMRES.}. 
 For $k$ sufficiently large so the maximum allowed rank of $\Kr_k(\bA_\star,R_0^{(l)})$ and 
 $\Kr_k(\bB_\star^T,R_0^{(r)})$ is reached, truncation is enforced, yielding the reduced subspaces 
 $
 {\rm range}(R_{k+1}^{(l)})
 \subset \Kr_{k+1}(\bA_\star,R_0^{(l)})$,
 $ {\rm range}( R_{k+1}^{(r)})
 \subset\Kr_{k+1}(\bB_\star,R_0^{(r)})$ of dimension {\tt maxrank}. The Petrov-Galerkin projection onto these subspaces will continue to decrease the residual norm as long as new information is injected into the subspaces after truncation, compared with the previous iterate. 
 Before the first forced truncation takes place, this condition can be formally written as
 $$
 {\rm range}( R_{k+1}^{(l)})
 \not\subset
 \Kr_{k}(\bA_\star,R_0^{(l)});
 $$
 and similarly for 
  ${\rm range}(R_{k+1}^{(r)})$.
After the first rank truncation to \texttt{maxrank},
 the condition above can be rewritten as
${\rm range}( R_{k+1}^{(l)})
 \not\subset
 {\rm range}( R_{k}^{(l)})$.
 
While the recurrence before truncation corresponds to a projection method with a growing subspace, 
the process after truncation may be interpreted as a {\it thick} restarting procedure, which is commonly used in projection methods for large eigenvalue problems and linear systems; see\footnote{Depending on the strategy adopted to retain vectors, the term ``Implicitly restarted methods``  is often employed.}, e.g., \cite[section 9.3]{Watkins.07}, \cite{doi:10.1137/S0895479897321362}. 
This procedure acts as follows: After a fixed number of iterations, the projection phase is stopped, then the current approximation space is reduced to a significantly smaller dimension - ensuring that relevant information is retained,  and finally the process is restarted by adding new vectors to this retained {\it thick} vector (in fact a tall matrix).
Each restart is called a cycle. In our setting, after the truncation that yields $R_{k+1}^{(l)}$, the new columns $\bmat{A}_\star\bullet R_{k+1}^{(l)}$ are added to the subspace, giving range$([R_{k+1}^{(l)}, \bmat{A}_\star\bullet R_{k+1}^{(l)}])$; this new subspace most likely again requires truncation. As the subsequent iterations proceed, the space dimension keeps changing in an accordion-like manner. In summary, once the truncation process is installed, every new iteration behaves like a cycle of thick restarting, and each restart consists of a single iteration.

\section{Numerical experiments on a benchmark problem}\label{sec:benchmark}
In this section, we consider a benchmark problem that is commonly used in the literature to test methods for solving (\ref{eq:main}) when the associated Kronecker matrix ${\cal A}$ is nonsymmetric and nonsingular. The aim here is to describe the general behavior of the two new methods \ssmr\ and \sscr, and compare their performance with that of  state-of-the-art algorithms. All experiments were performed in MATLAB on a modest MacBook Pro laptop with a 2.6GHz 6-Core Intel Core i7 processor and 16GB RAM. 

In all experiments we fix $\bX_0={0}$ and $\mathtt{tol}=10^{-6}$. As discussed in section \ref{Randomized residual matrix}, {when one or both of the dimensions $n_A$ or $n_{B}$ is too large}, we use a randomization strategy to compute the norm of the residual in the stopping condition.
 We set $s=2 (p\cdot \mathtt{maxrank}+q)$ and
\vskip 0.05in
\begin{itemize}
    \item[(i)] if {$n_A,n_B<s$}, we compute $\|\bR_{k+1}\|_F$;
    \item[(ii)] {if $n_A \ge s$ but $n_B < s$}, we compute $\|S_A\bR_{k+1}\|_F$;
    \item[(iii)] if {$n_A,n_B \ge s$} we compute $\|S_A\bR_{k+1}S_B^T\|_F$
\end{itemize}
\vskip 0.05in
where we use RSTTs (see section~\ref{Randomized residual matrix})
as sketching matrices $S_A\in\mathbb{R}^{s\times n_A}$ and $S_B\in\mathbb{R}^{s\times n_B}$ (if required) and $s_A=s_B=s$. We also report the true relative residual Res:=$\| \bmat{R}_{k}\|_{F}/ \| \bmat{R}_{0}\|_{F}$ for the obtained solution, to verify the reliability of our randomization strategy. 

Our implementation of Algorithm~\ref{alg:subspaceCR}  automatically switches between an exact and inexact computation of $\bm{\alpha}_k$ and $\bm{\beta}_k$: if $q_k$, the rank of the current $P_k^{(l)}$ and $P_k^{(r)}$, is such that $q_k^2<4000$, then we assemble the matrix $\mathfrak{T}$ in~\eqref{eq:inner_matrix_alpha_beta} and solve the related SPD linear system by computing its Cholesky factorization. Otherwise, we use preconditioned CG ({\sc pcg}) on the Kronecker formulation of (\ref{eqn:doublesum}), with relative residual norm tolerance $10^{-4}$. In our benchmark tests, we choose the following two-term preconditioning operator for this inner {\sc pcg} solve
{\small 
$$
{\cal P}(\bmat{\alpha})= (R_k^{(l)})^T\bmat{A}_1^T\bmat{A}_1 R_k^{(l)}\bm{\alpha}(R_k^{(r)})^T\bmat{B}_1\bmat{B}_1^T R_k^{(r)} + (R_k^{(l)})^T\bmat{A}_2^T\bmat{A}_2 R_k^{(l)}\bm{\alpha}(R_k^{(r)})^T\bmat{B}_2\bmat{B}_2^T R_k^{(r)}
$$ 
}
which is the leading operator of the projected equation for the considered problem.

\subsection{A convection-diffusion problem}
We  consider the following steady state convection-diffusion boundary value problem $$
\left\{\begin{array}{l}
          -\cep{\nu}\Delta u+\vec{w}\cdot \nabla u=f,\quad\text{in }D=(-1,1)^2,\\
          u(1,y)=u(x,-1)=u(x,1)=0,\\
          u(-1,y)=1\\
         \end{array}\right .
$$
\noindent with constant source term $f=1$ and recirculating wind field,
$$
\vec{w}(x,y)=(\phi_1(x)\psi_1(y),\phi_2(x)\psi_2(y))=(2y(1 - x^2), -2x(1 - y^2)).
$$
Following \cite{Palitta.Simoncini.16}, we apply standard centered finite differences for the first and second derivatives on a uniform mesh of points $(x_i, y_j)_{i,j=0,\ldots,n-1}$ with spacing $h=2/(n-1)$ in each direction. Denoting with $\bmat{X} \in \mathbb{R}^{n \times n}$ the matrix whose entries approximate $u(x_i,y_j)$, this discretization leads to the matrix equation 
\begin{equation}\label{eq:convdiff}
\bmat{A}_1 \bmat{X}+\bmat{X}\bmat{B}_2+ (\bmat{\Phi}_1 \bmat{E}_1) \bmat{X}\bmat{\Psi}_1+\bmat{\Phi}_2\bmat{X} (\bmat{E}_2\bmat{\Psi}_2) =C D^T,
\end{equation}
where $\bmat{A}_1$ and $\bmat{B}_2$ correspond to the discretized second derivatives in the $x$ and $y$ directions, respectively, while the other terms are related to the first derivatives. We test our solvers on \eqref{eq:convdiff} for different values of the  mesh parameter $h$ {(equivalently, $n$)} and the diffusion coefficient
$\cep{\nu}$. We apply the two-term preconditioner
\begin{equation}\label{eq:precond}
\mathcal{P}:\bmat{X}\rightarrow \bmat{A}_1\bmat{X}+\bmat{X}\bmat{B}_2,
\end{equation}
corresponding to the (discrete) diffusion part of the operator and approximate the action of $\mathcal{P}^{-1}$ via 8 iterations of the low-rank ADI method \cite{BENNER_SylvADI2014} with (sub)optimal Wachspress’ shifts~\cite{ADIshifts}. 

In Table~\ref{Tab-ConvDiff} we compare the performance of \sscr\ and \ssmr\  with that of low-rank GMRES ({\sc{lr-pgmres}})~\cite{PalittaKuerschner2021} for $\cep{\nu}=0.1, 0.01$ and for various values of the problem dimension $n=n_A=n_B$. {We apply the same two-term preconditioner for all methods}.
In each case, we report the number of iterations $k$, the CPU time in seconds, 
 and the true relative residual norm (Res) at termination. For \sscr\ and \ssmr, we adapt the value of {\tt maxrank} to the choice of $\cep{\nu}$. 
Specifically, we set $\mathtt{maxrank}=50$ for $\cep{\nu}=0.1$ and $\mathtt{maxrank}=70$ for $\cep{\nu}=0.01$. 
Due to the higher chosen value of $\mathtt{maxrank}$, we use {\sc pcg} to compute $\bm{\alpha}_k$ (and $\bm{\beta}_k$) when $\cep{\nu}=0.01$. In this case, we also report the minimum and maximum number of iterations (in square brackets) required for this inner solve (column `{\sc pcg}'). For $\cep{\nu}=0.1$, we construct~\eqref{eq:inner_matrix_alpha_beta} and solve the related SPD linear system using Cholesky factorization; {\sc pcg} is not employed. The rank of the final iterate  $\bX_k$ is reported in the column `Rank'. {For {\sc{lr-pgmres}},
 the low-rank factors representing the basis of the constructed subspace need to be stored. In the column `Mem' in Table~\ref{Tab-ConvDiff}, the number of stored $n$-dimensional vectors is reported, resulting in  $\mathcal{O}(\text{Mem} \cdot n)$ of memory allocations.}

\tabcolsep.95pt 
\begin{table}[tb]
\begin{center}
\begin{tabular} {| r  | c  cc c   c | cc  c c   c | c  c c r |  } \hline 
      \multicolumn{15}{|c|}{$\cep{\nu}=0.1\qquad$ ({\tt maxrank}=50)}   \\ \hline 
     &    \multicolumn{5}{c|}{ \sscr\ }    & \multicolumn{5}{c|}{ \ssmr\ } &    \multicolumn{4}{c|}{ \sc{lr-pgmres} }  \\ 
 $n$ &   $k$ & rank & {\sc pcg} &  Res & Time   &  $k$ & rank & {\sc pcg}  & Res & Time &  $k$ & Mem & Res & Time  \\
\hline
1,024  & 4 &  37 & --    &  7.2e-8   &  0.7  &  
 4   &  37   & --  &  2.8e-7   &  0.6 & 
17  & 835  &  4.3e-7    &  2.9  \\

2,048 &   4  &  39 & --   &  2.0e-7  &   0.8   &  4  &  39  & --  &  2.7e-7   &  0.7 &  15  & 779  &   6.0e-7  &   4.0\\

4,096 &    3   &  38 & --    & 9.9e-7   & 0.5  &   
4 &   41  & --  &  1.6e-7   &  1.0 & 14  & 780  &  4.8e-7  &  7.4  \\ 

8,192 &    3 &  39 & --   & 5.6e-7   & 1.0 &
3  &   40 & --  & 4.6e-7   & 0.9 & 
13 & 775  & 6.7e-7  &  16.4 \\
16,384  &    3    &  40 & --     & 6.3e-7   & 1.8  & 
3  &  41  & --   & 6.4e-7   & 1.7 & 
12  & 757  & 6.5e-7 &  28.7   \\
 \hline 
      \multicolumn{15}{|c|}{$\cep{\nu}=0.01\qquad$ ({\tt maxrank}=70)}   \\ \hline 
     &    \multicolumn{5}{c|}{ \sscr\ }    & \multicolumn{5}{c|}{ \ssmr\ } &    \multicolumn{4}{c|}{ \sc{lr-pgmres} }  \\ 
 $n$ &   $k$ & rank & {\sc pcg} &  Res & Time   &  $k$ & rank & {\sc pcg}  & Res & Time &  $k$ & Mem & Res & Time  \\
\hline
 1,024    & 12    &  50 & [73,97]   &  8.0e-7   &  3.5     &  
 15      &  50   & [94,106]  &  7.1e-7   &  4.8 & 
*  & *  &  *    &  *  \\
2,048             &   12     &  51 & [77,97]   &  7.0e-7    &   4.2     &  15      &  51  & [91,121]  &  7.0e-7   &  4.2 & 
 *  & *  &   *  &   *\\

4,096             &    11      &  55 & [68,119]   & 8.7e-7   & 5.6  &   
13 &   53  & [82,100]  &  9.6e-7   &  6.5 & *  & *  &  *  &  *  \\ 

8,192        &    11 &  59 & [69,102]   & 5.5e-7   & 10.5 &
12  &   56 & [79,118]  & 7.0e-7   & 8.9 & 
* & *  & *  &  * \\

16,384       &    10    &  70 &   [79,100]   & 5.2e-7   & 17.5  & 
10  &  66  & [83,93]   & 8.3e-7   & 14.9 & 
*  & *  & * &  *   \\
\hline
\end{tabular}{\caption{Performance of \sscr, \ssmr, and {\sc{lr-pgmres}} for the {convection-diffusion problem} preconditioned by~\eqref{eq:precond} with $\texttt{tol}=10^{-6}$, $\mathtt{maxit}=50$, and  $\texttt{toltrank}=10^{-10}$. Two-sided sketching is required in all cases. `*' means the method did not converge to the prescribed tolerance within 50 iterations.
\label{Tab-ConvDiff}}}
\end{center}
\end{table}

We first focus on results obtained for $\cep{\nu}=0.1$. Applying the ADI approximation to the preconditioner \eqref{eq:precond} results in an iteration count that is almost independent of $n$, thanks to the dominance of the diffusion part of the operator. The proposed randomization-based computation of the residual matrix norm is reliable; all the values in the `Res' column are below $\mathtt{tol}=10^{-6}$. 
Our new methods require a low number of iterations to meet the chosen stopping condition. This, along with the moderate value of {\tt maxrank}, leads to a very effective solution procedure with solve times one order of magnitude quicker than for {\sc lr-pgmres}. The main disadvantage of the latter solver is its large storage demand which is not comparable to that of our short recurrence methods. When $\cep{\nu}=0.01$, all methods require more iterations to converge. While both \sscr\ and \ssmr\ still achieve competitive results, converging quickly, {\sc lr-pgmres} does not converge within 50 iterations.

{To further demonstrate the improved performance of the new matrix recurrences over classical vector methods for (\ref{eqn:Kron}), in Table \ref{tab:bicgstab} we report the performance of BiCGStab($\ell$) (with $\ell=2$) for \eqref{eq:convdiff} in Kronecker form. Vector methods allocate vectors of length $n^2$; because of this high memory requirement, we only consider the first three values of $n$. The inbuilt MATLAB function \texttt{bicgstabl.m} was used \cite{matlab}, with stopping tolerance $10^{-6}$. Incomplete LU preconditioning with threshold $10^{-4}$ was also employed, requiring storage for about 4 times the number of nonzeros\footnote{The entries of the coefficient matrix $\cal A$ were first reordered using {\tt symamd} to limit fill-in.} of $\cal A$.}

{In addition to the strong memory limitations, the results in Table \ref{tab:bicgstab} demonstrate that the vector method is extremely expensive compared to the matrix iterations, even for the smaller values of $n$ considered. It is also worth noting that the set-up cost of the preconditioner is significant; the incomplete LU factorisation requires around $9.5$ seconds for the case $n=1,024$, rising to approximately $230$ seconds for $n=4,096$. These  costs are not included in the timings reported in Table \ref{tab:bicgstab}.}

{Finally, we notice that GMRES($m$) was also tested, with $m=5$ and the same preconditioner; timings were not better, while in general the method requires more memory than BiCGStab(2). Results are not reported.}

\tabcolsep3pt 
\begin{table}[htb]
\centering
\begin{tabular}{|c|c|c| c|r||c|c|c| c|r|} \hline
$\cep{\nu}$ & $n$ & \# Iter & Res & Time & $\cep{\nu}$ & $n$ & \# Iter & Res & Time\\
\hline
0.1 & 1,024  &  33 & 4.8e-07 & 4.0 & 0.01& 1,024  &  17 & 4.7e-07 & 2.1\\
    & 2,048  &  59 & 7.0e-07  & 32.4 &    & 2,048  &  33 & 2.6e-07 & 17.6\\
    & 4,096  &  109 & 9.7e-07 & 2,316.1 &     & 4,096  &  79 & 2.9e-07 & 1,709.3\\ \hline
\end{tabular}
\caption{Performance of BiCGStab$(2$) for the convection-diffusion problem \eqref{eq:convdiff}, using ILU preconditioning  with threshold $10^{-4}$.  Reported CPU times are in seconds. \label{tab:bicgstab}}
\end{table}

\section{Application to a stochastic Galerkin mixed finite element problem}\label{sec:diffusion}

We now apply \ssmr\ and \sscr\ to a challenging class of matrix equations that arises when we apply a stochastic Galerkin mixed finite element method (SG-MFEM) to a system of PDEs with uncertain coefficients. Specifically, we consider an SG-MFEM discretization of the parametric Darcy flow problem \cite{BPS2012}, \cite{saddlepaper} that leads to a prototypical parametric saddle point problem.  After reformulating the discrete problem as a matrix equation, we employ the new solvers with a one-term preconditioner that renders the left coefficient matrices non-symmetric. 
Previous work on low-rank solvers for SGFEM matrix equations has focused on parametric PDE models that yield linear systems with SPD matrices (see  \cite{MultiRB, Kookjin} and references therein).

\subsection{Parametric Darcy Flow Problem}  Let $D \subset \mathbb{R}^{2}$ be a spatial domain with boundary $\partial D = \partial D_{D} \cup \partial D_{N}$  and define the parameter domain $\Gamma :=[-1,1]^{M}$.  We consider the following boundary value problem: find $\vec{u}: D \times \Gamma \to \mathbb{R}^{2}$ (a velocity field) and $p: D \times \Gamma \to \mathbb{R}$ (a pressure field)  that satisfy $\rho$-a.s. on $\Gamma$,
	\begin{equation}\label{GW-flow}
	\begin{aligned}
	\kappa(\mathbf{x}, \mathbf{y})^{-1} \vec{u} (\mathbf{x}, \mathbf{y}) + \nabla p(\mathbf{x}, \mathbf{y})  & =  0 &  \quad  \textrm{ in }  \quad D, \\
	 \nabla \cdot \vec{u}(\mathbf{x}, \mathbf{y})  & =  0 &   \quad   \textrm{ in }  \quad D , \\
	 p (\mathbf{x}, \mathbf{y})  & = g(\mathbf{x})  & \quad   \textrm{ on }  \partial D_{D},   \\
	 \vec{u}(\mathbf{x}, \mathbf{y}) \cdot \vec{n} & =  0 &  \quad    \textrm{ on } \partial D_{N}.
	\end{aligned}
	\end{equation}
Here, we assume that $\kappa^{-1}$ is a parameter-dependent function of the form
\begin{align}\label{kappa-def}
 \kappa (\mathbf{x}, \mathbf{y})^{-1} := \kappa_{0}(\mathbf{x}) + {\sum_{r=1}^{M}}\kappa_{r}(\mathbf{x}) y_{r}, \qquad \mathbf{x} \in D, \qquad \mathbf{y} \in \Gamma,
 \end{align}
and the parameters $y_{r}: =\xi_{r}(\omega)$ are images of independent uniform random variables $\xi_{r} \sim U(-1,1)$ with joint probability density $\rho = (1/2)^{M}$. This model arises when the reciprocal of the permeability coefficient is represented as a random field.  The chosen model \eqref{kappa-def} mimics the separable structure of a truncated Karhunen--Lo\`eve (KL) expansion \cite{Lord} where $\kappa_{0}= \mathbb{E}[\kappa^{-1}]$ and $\kappa_{r}:=\sqrt{\lambda_{r}} \phi_{r}$ where $(\lambda_{r}, \phi_{r})$ is an eigenpair of the chosen covariance operator.  Crucially, $\lambda_{r}\to 0$  as $r \to \infty$ at a rate that depends on the smoothness of the covariance and when $ \| \kappa_{r} \|_{\infty} \to 0$ rapidly as $r \to \infty$, we expect to be able to approximate the solution well in low rank format. To set up a well-posed weak formulation, the following assumption is needed.
	\begin{assumption}\label{A-ass} $\kappa^{-1} \in L^{\infty}(D \times \Gamma)$ and there exist $\kappa_{\min}$ and $\kappa_{\max}$ such that \\ 
$0 < \kappa_{\min} \le \kappa^{-1}(\mathbf{x} ,\mathbf{y}) \le \kappa_{max} < \infty$, a.e. in $D \times \Gamma.$
\end{assumption} 
We also make the following assumption about the parameter-free part.
\begin{assumption}\label{A0_ass} $\kappa_{0} \in L^{\infty}(D)$ and there exist $\kappa_{0,\min}$ and $\kappa_{0, \max}$ such that \\
$0 < \kappa_{0, \min} \le \kappa_{0}(\mathbf{x}) \le \kappa_{0,\max} < \infty$, a.e. in $D.$
\end{assumption} 
\noindent To ensure that Assumption \ref{A-ass} holds, we assume $\tau : =  \frac{1}{\kappa_{0, min}}\sum_{r=1}^{M} \| \kappa_{r} \|_{\infty} <1$.
 
\subsection{Stochastic Galerkin Approximation} 

Following  \cite{saddlepaper}, we apply stochastic Galerkin approximation using tensor product spaces. Briefly,  we look for approximations $\vec{u}_{h,q} \in \mathbf{V}_{h} \otimes S_{q}$ and $p_{h,q} \in W_{h} \otimes S_{q}$  that satisfy the associated weak form of \eqref{GW-flow} where  $\mathbf{V}_{h} \subset H_{0,N}(div;D)$ and $W_{h} \subset L^{2}(D)$ are an inf-sup stable pair of finite element spaces associated with a spatial mesh on $D$ and $S_{q} \subset L_{\rho}^{2}(\Gamma)$. 
In the experiments below, we use lowest-order rectangular Raviart--Thomas elements. On $\Gamma$, we employ \emph{global} polynomial approximation of total degree $\le q$.  In this case, $ n_{q} : = \textrm{dim} (S_{q}) = (M+q)!/M! q!$, where $M$ is the number of input parameters.

If we group all spatial unknowns for both solution fields $\vec{u}_{h,q}$ and $p_{h,q}$ per parametric degree of freedom, then the finite-dimensional weak problem can be written as
\vspace{-0.1in}
 \begin{equation}\label{mat-eq}
{ \left(\bmat{G}_{0} \otimes \bmat{A}_{0} + \sum_{r=1}^{M} \bmat{G}_{r} \otimes \bmat{A}_{r}\right)}
{x} = {g}_{0} \otimes {f} 
\qquad \Leftrightarrow \qquad
{\cal A} x = b,
  \end{equation}
  where ${\mathcal{A}}$ is symmetric and indefinite.  Using $S_{q} = \textrm{span}\{ \psi_{1}(\mathbf{y}), \ldots, \psi_{n_{q}}(\mathbf{y}) \}$, we have
$$[\bmat{G}_{r}]_{i,j} :=  \mathbb{E}\left[ y_{r} \psi_{i} \psi_{j} \right], \qquad i,j=1, \ldots, n_{q}, \quad r=0,1, \ldots, M,$$
(where $y_{0}:=1$) and we elect to work with an orthonormal Legendre basis so that $\mathbb{E}\left[\psi_{i} \psi_{j} \right] = \delta_{i,j}$ and $\bmat{G}_{0} = \bmat{I}$. The vector ${g}_{0} \in \mathbb{R}^{n_{q}}$ denotes the first column of $\bmat{G}_{0}$. The matrices $\bmat{G}_{r}$ are symmetric for $r \ge 1$ but are indefinite. The finite element matrices
	\begin{eqnarray*}
	\bmat{A}_{0} = \left( \begin{array}{cc} \bmat{K}_{0} & \bmat{B}^{T} \\ \bmat{B} & \bmat{0} \end{array} \right), \qquad \bmat{A}_{r} = \left( \begin{array}{cc} \bmat{K}_{r} & \bmat{0} \\ \bmat{0} & \bmat{0} \end{array} \right), \quad r \ge1,
	\end{eqnarray*}
\noindent  are symmetric and indefinite  with $\bmat{K}_{0}$ positive definite (due to Assumption \ref{A0_ass}) and $\bmat{B}^{T}$ has full column rank. 
We define the vector $ f = [{g}^{T}, {0}^{T}]^{T}  \in \mathbb{R}^{n_{h}}$ with $n_{h} = \textrm{dim}(\mathbf{V}_{h}) + \textrm{dim}(W_{h})$ where ${g}$ incorporates the non-zero Dirichlet boundary condition.  Due to the Kronecker structure, \eqref{mat-eq} can also be written as a $p=(M+1)$-term matrix equation
\begin{equation}\label{SGFEM_mat}
\bmat{A}_{0} \bmat{X} \bmat{G}_{0} + \bmat{A}_{1} \bmat{X} \bmat{G}_{1} + \cdots + \bmat{A}_{M} \bmat{X} \bmat{G}_{M} = {f} {g}_{0}^{T},
\end{equation}
with coefficient matrices that are {symmetric} and indefinite, with solution $\bmat{X} \in \mathbb{R}^{n_{h} \times n_{q}}$.

\subsection{Preconditioned Matrix Equation}

Following the discussion in section~\ref{sec:preconditioning}, it is natural to use a one-term preconditioner for \eqref{SGFEM_mat} based on the pair  $(\bmat{A}_0, \bmat{G}_{0})$ so that $ {\mathcal{P}}^{-1}(\bmat{R}_{k+1}) =   \bmat{A}_{0}^{-1} \bmat{R}_{k+1}$ (since $\bmat{G}_{0} = \bmat{I}$).  This strategy is equivalent to preconditioning the Kronecker system with the symmetric and indefinite matrix $\mathcal{P} = \bmat{I} \otimes \bmat{A}_{0}$. This is a `mean-based' preconditioner as $\bmat{A}_{0}$ only incorporates the leading part $\kappa_{0}$ of the uncertain input. Such preconditioners are successful when the variance of the input is low to moderate relative to the mean. ${\mathcal P}$ can also be viewed as a constraint preconditioner. This is easier to see if one reorders the degrees of freedom and rewrites the coefficient matrix ${\cal A}$ of the linear system in \eqref{mat-eq} and the preconditioner ${\cal P}$ as

\begin{eqnarray}\label{GWflow_saddle}
	{\cal A} = \left(\begin{array}{cc} \sum_{r=0}^{M} \bmat{G}_{r} \otimes \bmat{K}_{r} & \bmat{I} \otimes \bmat{B}^{\top} \\  & \\  \bmat{I} \otimes \bmat{B} & \bmat{0} \end{array} \right), \qquad {\cal P} = \left(\begin{array}{cc}  \bmat{I} \otimes \bmat{K}_{0} & \bmat{I} \otimes \bmat{B}^{\top} \\  & \\  \bmat{I} \otimes \bmat{B} & \bmat{0} \end{array} \right).
	\end{eqnarray}
If enough memory is available to store vectors of length $n_{h}n_{q}$, one may use {\sc{minres}} \cite{MINRES} as a solver with an SPD preconditioner, as in \cite{saddlepaper}.
However, for problems with the structure considered here, indefinite constraint preconditioners can be particularly effective. 

If properly initialized, {\sc{minres}} with a constraint preconditioner is equivalent to a projection method \cite{Gouldetal}.
Results in \cite{Lukvsan1998}, \cite{Kelleretal}, and  \cite{Rozloznik.Simoncini.02}, show that $1$ is an eigenvalue of the preconditioned system matrix with high multiplicity, 
and the remaining eigenvalues
are real and lie in the spectral interval of 
the SPD matrix $\sum_{r=0}^{M} \bmat{G}_{r} \otimes \bmat{K}_{r}$ preconditioned by $\bmat{I} \otimes \bmat{K}_{0}$.
Using Assumptions \ref{A-ass} and \ref{A0_ass}, one can show that this interval is contained in $[1-\tau, 1+ \tau] \subset \mathbb{R}^{+}$ so that all the eigenvalues are positive.

\subsection{Numerical Experiments}
We first apply \sscr\ and \ssmr\ to a synthetic problem where the coefficients in \eqref{kappa-def} decay rapidly, and $\bmat{X}$ can be approximated with $\texttt{maxrank} \le 40$. We then consider a more challenging case which requires a larger value of \texttt{maxrank} for the same tolerance \cep{(\texttt{tol})}. In both problems, an appropriate value of \texttt{maxrank} for a fixed value of $M$ is determined by running initial experiments on problems with a small value of $n_{h}$ (coarse spatial mesh). Since $\bmat{A}_{0} \in \mathbb{R}^{n_{h} \times n_{h}}$ becomes costly to invert (via  factorization) for fine spatial meshes, we apply an inexact preconditioner based on the pair $(\widetilde{\bmat{A}}_{0}, \bmat{I})$, where $\widetilde{\bmat{A}}_{0}$ is defined by replacing the (1,1) block of $\bmat{A}_{0}$ by the diagonal of $\bmat{K}_{0}$, which (since $\bmat{K}_{0}$ is a weighted mass matrix) is spectrally equivalent.  We fix $\bm{X}_{0}=0$,  $\texttt{tol}=10^{-6}$ and $\texttt{toltrank}=10^{-8}$ \cep{in all tests} and report the number of iterations $k$ required to meet the stopping condition,  the solution time (in seconds), and the actual rank of the final solution iterate $\bmat{X}_{k}$, as the number of parameters $M$ and the SG-MFEM discretization parameters $n_h$ and $q$ are increased. In Test Problem 2, where the dimensions of the reduced problems for ${\bm\alpha_{k}}$ and ${\bm\beta_{k}}$ are larger, we use {\sc{pcg}} for the inner solves \cep{and report the minimum and maximum number of  {\sc{pcg}} iterations}. 

\cep{In addition to the new subspace methods, we apply standard {\sc{minres}} to the associated linear systems with the same one-term preconditioner (denoted {\sc pminres}) and stopping condition. For Test Problem 1 (where memory requirements are lower and a comparison is feasible), we also apply the low-rank matrix-oriented {\sc{minres}} method from \cite{Stoll.Breiten.15}, again with the same one-term preconditioner (denoted {\sc lr-pminres}).}

\subsection*{Test Problem 1: Fast Decay Case}  Let $D = [0,1]^{2}$ with $p=g=1$ on  $\partial D_{D} =  \{0 \} \times [0,1]$ and $\vec{u} \cdot \vec{n} = 0$ on $\partial D_{N} = \partial D \setminus \partial D_{D}$, modelling flow from left to right across the domain. We choose $\kappa^{-1}$ as in \eqref{kappa-def} with $\kappa_{0}=1$ and $\kappa_{r}(\bm{x})= \sqrt{\lambda}_{r} \phi_{r}(\bm{x})$ where  $\sqrt{\lambda}_{r} = 0.832 r^{-4}$ and $\phi_{r}(\bm{x}) = \cos(2 \pi \beta_{1}(r) x_{1}) \cos(2 \pi \beta_{2}(r) x_{2})$ with $\bm{x} =[x_{1}, x_{2}]^{\top} \in D,$
where $\beta_{1}(r) = r - l(r)(l(r)+1)/2$, $\beta_{2}(r) = l(r) - \beta_{1}(r)$ and $l(r) = \lfloor - 1/2 + \sqrt{1/4 + 2r} \rceil$. This construction \cite{Eigel} provides a synthetic example of a KL expansion with rapidly decaying terms.
Choosing $M=5$ and $M=9$ (giving six and ten terms in the matrix equation) ensures we keep all terms with $\sqrt{\lambda_{r}} > 10^{-3}$ and $\sqrt{\lambda_{r}} >10^{-4}$. 

\tabcolsep3.2pt 
 \begin{table}[htb]
\begin{center}\cep{
\begin{tabular} {| c  c  r | c  c c  c  r |c  c c  c  r |  } \hline 
 \multicolumn{13}{|c|}{$ n_h=49,152$}   \\ \hline 
\multicolumn{3}{|c|}{} &    \multicolumn{5}{c|}{ \ssmr\ } &    \multicolumn{5}{c|}{ \sscr\ }   \\ 
 $M$ &   $q $  & $n_{q}$ & \texttt{maxrank} & Rank & $k$  & {Res} & Time   & \texttt{maxrank} & Rank & $k$  & {Res} & Time  \\ \hline
 &  4  &  126  & {40}  &  {35}  &  10  &  6.1e-07  &  11  & 30  &  30 &  10  &  5.5e-07 & 12  \\
5 &  5  & 252  & {40}  &  {40} &  12 &  7.0e-07  & 14  & 30  & 30 &  11  &  8.7e-07  & 14  \\
&  6  &  462  & 40  & 40  &  13   &  8.5e-07 & 16 & 40  & 40 &  11  &  5.2e-07   & 20 \\ \hline
& 4  & 715   &  40 &  40  &  11 & 8.4e-07  &  32  & 40 &  40  &  9 & 6.2e-07  & 38  \\
9  & 5 &  2,002  & 40  &  40  &  13  & 9.3e-07 &  \textbf{40}   & 40  &  40 &  10 & 8.0e-07  &  \textbf{47}  \\
 & 6 &  5,005  & 40  &  40 &  15   & 7.9e-07  & \textbf{47} & 40  &  40 &  11  & 7.7e-07  & \textbf{57}    \\ \hline 
\multicolumn{13}{|c|}{$ n_h=196,608$}   \\ \hline 
 \multicolumn{3}{|c}{} &    \multicolumn{5}{|c|}{ \ssmr\ }  &    \multicolumn{5}{c|}{ \sscr\ }    \\ 
 $M$ &   $q $  & $n_{q}$ & \texttt{maxrank} & Rank & $k$  & {Res} & Time   & \texttt{maxrank} & Rank & $k$  & {Res} & Time \\ \hline
   &  4  &  126   & 30  &  30   &   12  &  7.4e-07  &  44 & 30  &  30  &  9  &  9.0e-07 & 49   \\
5  &  5  &  252  &  {40}  & {39} &  12  &  7.8e-07  & 59 & 30  & 30   &  11 &  7.2e-07 & 69  \\
   &  6  &    462  & 40  & 40    &   13 &  9.0e-07  & 69   & 40  & 40  &  10 &  8.5e-07  & 77    \\ \hline
& 4  &   715  &  40 &  40  &  11  & 7.7e-07   &  {126} &  40 &  40  &  9  & 4.9e-07  &  {154} \\
9 & 5 &  2,002  & 40  &  40   &  13  & 8.0e-07  & \textbf{154}   & 40  &  40  & 10  &6.8e-07  & \textbf{184} \\
& 6 &  5,005  & 40  & 40   & 14  & 9.1e-07  & \textbf{164}   & 40  & 40   & 11  & 6.2e-07  & \textbf{204}  \\ \hline 
\end{tabular}{\caption{\textbf{Test Problem 1} \cep{Performance of \ssmr\ and \sscr\ on the matrix equation formulation with one-term preconditioning, $\texttt{tol}=10^{-6}$ and  $\texttt{toltrank}=10^{-8}$. Timings are rounded to the nearest second.}\label{TabMR1}}}}
\end{center}
\end{table}

\tabcolsep6.5pt 
\begin{table}[htb]
\begin{center}\cep{
\begin{tabular} {| c  c  r | c c r |  c c c   r | } \hline 
 \multicolumn{10}{|c|}{$ n_h=49,152$}   \\ \hline 
\multicolumn{3}{|c|}{} &       \multicolumn{3}{c|}{\sc{pminres}}  &     \multicolumn{4 }{c|}{\sc{lr-pminres}} \\ 
 $M$ &   $q $  & $n_{q}$ &  $k$   & {Res} & Time & \texttt{maxrank} &  $k$ &  {Res} & Time  \\ \hline
 &  4  &  126  &  18  &  8.6e-07  & 11  &  40 &  18  &  8.7e-07 &  147 \\
5 &  5  & 252 &   19  & 2.7e-07 &  23  & 40  & 20   &7.9e-07 & 161 \\
&  6  &  462  &    19  & 2.2e-07  & 48 & 40  & 21   & 8.1e-07 & 172 \\ \hline
& 4         & 715   &    18 & 8.6e-07  & 106 & 40  & 18   & 9.4e-07 & 170 \\
9  & 5 &  2,002   &   19  & 2.7e-07 &   575  & 45$\dagger$  & 20&  7.9e-07 & 237 \\
 & 6 &  5,005   & *  & *  &  *   &  45$\dagger$  & 21  & 6.2e-07 &  267   \\ \hline 
\multicolumn{10}{|c|}{$ n_h=196,608$}   \\ \hline 
 \multicolumn{3}{|c}{} &      \multicolumn{3}{c|}{\sc{pminres}}   &     \multicolumn{4}{c|}{\sc{lr-pminres}} \\ 
 $M$ &   $q $  & $n_{q}$  &  $k$ & {Res} & Time  &  \texttt{maxrank} & $k$  & {Res} & Time  \\ \hline
   &  4  &  126  & 18 &  8.6e-07   &  49 & 40$\dagger$  & 18  & 8.7e-07 & 606  \\
5  &  5  &  252  &  19 & 2.5e-07  &  133  & 40 & 19             &  4.6e-07 & 631 \\
   &  6  &    462    &  19 & 2.0e-07  &  390  & 40 & 20          & 8.3e-07 & 680    \\ \hline
& 4  &   715  &  18 &  8.6e-07  &  875  &40  & 18     &  9.2e-07 & 699  \\
9 & 5 &  2,002  &    *  &  * &  *  & 40  & 19   & 6.8e-07 & 716  \\
& 6 &  5,005  &  * &  * &  *     &  45$\dagger$& 20 &   7.6e-07 & 987  \\ \hline 
\end{tabular}{\caption{\textbf{Test Problem 1} \cep{Performance of {\sc{pminres}} and {\sc{lr-pminres}} with one-term constraint preconditioning and $\texttt{tol}=10^{-6}$. For {\sc{lr-pminres}}, $\texttt{toltrank}=10^{-8}$, and \texttt{maxrank} is chosen as for \ssmr\ in Table \ref{TabMR1} unless indicated (by $\dagger$). The symbol $*$ indicates that the experiment was aborted due to excessive memory/time requirements. Timings are rounded to the nearest second.}\label{TabCR1}}}}
\end{center}
\end{table}

 \cep{In Table \ref{TabMR1}},  we display results obtained with \ssmr\ and \sscr\ for problems discretized on two finite element meshes on $D$ and with polynomials of total degree $\le q=4,5,6$ on $\Gamma$. In most cases the final rank is equal to the chosen value of \texttt{maxrank}. We see that the number of iterations $k$ required by both subspace methods is independent of $M$ and the discretization parameters. \ssmr\ is generally quicker, \cep{however}, requiring only $1$--$4$ more iterations than \sscr.  \cep{Timings in bold indicate cases where two-sided sketching was applied in the estimation of the residual norm.  On the finest mesh, with  $M=9$ and $q=6$ so that $n_q=5,005$, the discrete problem consists of over 984 million equations. The problem is solved with the new subspace methods with modest memory requirements in a couple of minutes.} 

 \cep{In Table \ref{TabCR1},  we report results obtained with {\sc{pminres}} and {\sc{lr-pminres}}. The latter was designed in \cite{Stoll.Breiten.15} specifically for a three-term matrix equation associated with a PDE-constrained optimization problem, so some modification was needed. Since the stopping condition \vs{in {\sc{lr-pminres}}} is based on a quantity that can be significantly lower than the true residual \vs{norm}, to facilitate a fair comparison with our subspace methods, we applied {\sc{lr-pminres}} with the same value of \texttt{maxrank} and \texttt{toltrank} that was used 
 \vs{for} \ssmr\ 
 and then experimented with the stopping tolerance to determine the number of iterations $k$ required to achieve a relative residual norm less than $10^{-6}$. For this test problem, a stopping tolerance of $10^{-7}$ (i.e., one order of magnitude lower) sufficed to achieve a final approximation of comparable accuracy. However, in a few cases (indicated with $\dagger$) it was necessary to increase \texttt{maxrank} to achieve this. The number of iterations $k$ required by {\sc{pminres}} and {\sc{lr-pminres}} is similar, as expected, and independent of $M$ and the discretization parameters, thanks to the choice of preconditioner. For very small problems, {\sc{pminres}} is the quickest method. For larger problems, {\sc{lr-pminres}} is quicker than {\sc{pminres}}. However, comparing the results in Tables \ref{TabMR1} and \ref {TabCR1}, it is clear that the new subspace methods outperform both {\sc{pminres}} and {\sc{lr-pminres}} by a substantial margin in terms of timings.}

 \subsection*{Test Problem 2: Slow Decay Case}  Next, we consider $D=[-1,1]^{2}$ with $p=g=1$ on $\partial D_{D} =  \{-1 \} \times [-1,1]$ and $\vec{u} \cdot \vec{n} = 0$ on $\partial D_{N} = \partial D \setminus \partial D_{D}$.   This time, we model $\kappa^{-1}$ as a truncated KL expansion in terms of random variables $\xi_{r} \sim U(-\sqrt{3}, \sqrt{3})$ with mean $\kappa_{0}=1$ and separable exponential covariance
 $$C(\bm{x}, \bm{x}') = \sigma^{2} {\prod_{i=1}^{2} \exp\left(-\frac{ | x_i - x_i' |}{\ell_{i}}\right)}, \qquad \bm{x}, \bm{x}' \in D. $$
 In the parametric representation \eqref{kappa-def},  we then have $\kappa_{r} = \sigma \sqrt{3} \sqrt{\lambda}_{r}  \phi_{r}$ where $\{(\lambda_{r}, \phi_{r})\}$ are eigenpairs of $\sigma^{-2}C$, and these can be computed analytically \cite{Lord}.  For the full field, $\int_{D} \textrm{Var}[\kappa^{-1}] d \bm{x} = 4 \sigma^{2}$ and for the truncated one, we have $\sigma^{2} \sum_{r=1}^{M} \lambda_{r}$. This fact may be used to decide on an appropriate value for $M$. It is well known that the separable exponential covariance is problematic. The eigenvalues decay very slowly, especially for small correlation lengths $\ell_i$.  We include it to illustrate the limiting performance of our new methods on a difficult problem where the required rank is not that small, and to motivate the need for future work.

 \tabcolsep5.5pt 
 \begin{table}[ht!]
\begin{center}
\begin{tabular} {| c  c  r | c  c  c c  r | c c r |  } \hline 
 \multicolumn{11}{|c|}{$ n_h=49,152$}   \\ \hline 
\multicolumn{3}{|c|}{} &  \multicolumn{5}{c|}{ \ssmr }   &  \multicolumn{3}{c|}{ \sc{pminres} }  \\ 
 $M$ &   $q $  & $n_{q}$ & \texttt{maxrank}  & $k$ & {\sc pcg}  & Res & Time  &  $k$ & Res & Time  \\
\hline
& 4 &  495 & 80    &  7  & [6, 7]   &  7.5e-07   &  24   & 10   &   6.6e-07    &  32 \\
8  & 5 &  1,287  & 100   &  7  & [6, 7]   &  5.1e-07 &  30    & 10  &   5.2e-08  &  110 \\
 &  6 &  3,003  & 100   &  7  &  [6, 7]   &  7.0e-07 & \textbf{33} &   9  &   9.4e-07  & 381  \\ \hline
 & 4  & 1,820  &  120   & 7  &  [6, 7]  & 8.0e-07  & 75 & 10 &  3.4e-07  &234 \\
12 & 5 &  6,188  & 140  &  7  &  [6, 7]  & 8.0e-07 & \textbf{102}  & *  & * &  *  \\ 
& 6 & 18,564  & 140   & 8 &  [5 ,7]  & 5.1e-07 & \textbf{139}  & * &  * &  *       \\ \hline \hline
  \multicolumn{11}{|c|}{$ n_h=196,608$}   \\ \hline 
 \multicolumn{3}{|c|}{} &    \multicolumn{5}{c|}{ \ssmr\ }    &     \multicolumn{3}{c|}{ \sc{pminres} }  \\ 
 $M$ &   $q $  & $n_{q}$ & \texttt{maxrank} & $k$ & {\sc pcg}  & Res & Time   &  $k$ & Res & Time  \\
\hline
  &  4  &  495  & 80  &   7 & [6, 7]  &  6.8e-07   & 100 &    10  & 6.6e-07 & 252  \\
8  &  5  &  1,287 & 100  & 7  & [6, 7]  & 4.1e-07  & 165  &  10  & 5.4e-08 & 987  \\
&  6  &  3,003  & 100  &  7 &  [6, 7] & 5.1e-07   & \textbf{168} & * & * & *  \\ \hline
 & 4  & 1,820   &  120  &  7 &  [6, 7] & 6.8e-07   & 421  &   * &  *  & *  \\
12  & 5 &  6,188 & 140     & 7 &  [6, 7]    &  6.2e-07  &  \textbf{483} &   *  &  * &  *  \\
 & 6 & 18,564    & 140   &  7 & [6, 7] & 7.9e-07 & \textbf{491} &  * & * & *       \\ \hline 
\end{tabular}{\caption{\textbf{Test Problem 2} \cep{Performance of \ssmr\ on the matrix equation formulation with one-term preconditioner, $\texttt{tol}=10^{-6}$ and $\texttt{toltrank}=10^{-8}$, and {\sc{pminres}} on the associated linear system. The symbol $*$ indicates that the experiment was aborted due to excessive memory/time requirements. Timings are rounded to the nearest second.} \label{TabMR2}}}.
\end{center}
\end{table}

Recall, the computation of $\boldsymbol{\alpha}_{k}$ and $\boldsymbol{\beta}_{k}$ involves solving reduced problems with $(M+1)^{2}$ terms. This squaring, coupled with the not-so-small required values of \texttt{maxrank} (see Table \ref{TabMR2}) poses a computational challenge. 
In all cases, the rank of the final iterate was equal to the stated value of \texttt{maxrank}. Since we require larger values of \texttt{maxrank}, we confine our study here to a problem with $\ell_{1}=\ell_{2} = 2$ (large correlation length) so that we do not need to choose $M$ to be too large. We fix the standard deviation to be $\sigma=0.15$ so that the truncated field remains spatially positive and the mean-based preconditioner is effective.  Specifically, we consider $M=8$ and $M=12$, so that the associated matrix equations have nine and thirteen terms, and we retain $87\%$  and $89\%$ of the variance of the random input field, respectively.  

Results obtained with \ssmr\ are presented in Table \ref{TabMR2}. Timings in bold indicate cases where the problem dimension is large enough that two-sided sketching is needed. Results with \sscr\ are not shown. In most cases, it converged in one fewer iteration but  was substantially slower than \ssmr\ for larger problems due to the increased computational effort required to compute $\boldsymbol{\beta}_{k}$. Again, the number of iterations required by both methods is independent of the SG-MFEM discretisation parameters. The iteration counts are lower than in the last example. This is to be expected as the variance of the random input is smaller, making the mean-based preconditioner more effective. Due to the higher values of  \texttt{maxrank} required, the inner solves for the reduced problems had to be performed with {\sc pcg}, but inner iteration counts are also independent of the discretization parameters.  {\sc{pminres}} with the constraint preconditioner also converges well in terms of iteration counts. However, \cep{as expected}, it is not competitive in terms of time or memory requirements. On the finest spatial mesh, with  $M=12$ and $q=6$, the discrete problem consists of more than $3.9\times 10^{9}$ equations. \cep{With \ssmr\, we can solve the system in a few minutes with modest resources.}

\section{Conclusions}\label{sec:Conclusions}
We have derived a new class of {short} recurrences that can be used to solve linear multiterm matrix equations associated with general nonsymmetric coefficient operators, when the solution can be approximated by a low-rank matrix. The iterative methods are effective thanks
to a careful treatment of the inherent structure throughout the solution process. Rank truncations and randomization 
strategies are fundamental ingredients of our approach to keep memory allocations small and to facilitate the solution of very large problems.

The reported results also show that the new strategies are able to efficiently solve matrix equations in the target class for a wide range of parameter values and discretization settings. \vs{Our numerical experiments suggest that, when a good preconditioner is available, the \ssmr\ method shows better performance in terms of CPU time than \sscr, while ensuring lower memory requirements. However, it is certainly possible that for other problems, with other preconditioning strategies, \sscr\ might be more competitive. Here, we focused on introducing a general family of methods for solving nonsymmetric matrix equations which, as in the vector setting, may be enhanced by allowing a longer term recurrence. Working with longer recurrences clearly provides additional computational challenges. We leave this important open problem to future research.}

\section*{Acknowledgments}
\cep{The authors would like to thank Martin Stoll for providing the {\sc lr-minres} code that was used in section 8}, \vs{and the reviewers for their insightful comments}. {VS would also like to thank Maike Meier for pointing to \cite{MeierPhD.24} for bounds used in section~\ref{Randomized residual matrix}.}
The work of VS was partially supported by the
European Union - NextGenerationEU under the National Recovery and
Resilience Plan (PNRR) - Mission 4 Education and research
- Component 2 From research to business - 
Investment 1.1 Notice Prin 2022 - DD N. 104 of 2/2/2022,
entitled “Low-rank Structures and Numerical Methods in Matrix
and Tensor Computations and their
Application”, code 20227PCCKZ – CUP J53D23003620006. The same fund partially supported the visit of CP to the University of Bologna in September 2025. DP and VS are members of INdAM, Research Group GNCS. CP gratefully acknowledges the Dame Kathleen Ollerenshaw travel fund, administered by the University of Manchester. 

All authors acknowledge that they conducted some of this work at the Institute for Computational and Experimental Research in Mathematics (ICERM) in Providence, USA, which is supported by the National Science Foundation under Grant No. DMS-1929284, while participating in the Stochastic and Randomized Algorithms program, Spring semester 2026.
\bibliographystyle{siamplain}
\bibliography{references}

\end{document}